\numberwithin{equation}{section}
\tikzset{point/.style={circle,fill,draw,inner sep=0,minimum size=3pt}}
\tikzset{vertex/.style={circle,fill,draw,inner sep=0,minimum size=7pt}}
\tikzset{overtex/.style={circle,fill=none,draw,inner sep=0,minimum size=7pt}}
\theoremstyle{plain}
\newtheorem{theorem}{Theorem}[section]
\newtheorem{lemma}[theorem]{Lemma}
\newtheorem{corollary}[theorem]{Corollary}
\newtheorem{definition}[theorem]{Definition}
\newtheorem*{definition*}{Definition}
\newtheorem{proposition}[theorem]{Proposition}
\newtheorem{question}[theorem]{Question}
\theoremstyle{remark}
\newtheorem{remark}[theorem]{Remark}
\begin{document}

\title[Some applications of Wagner's subgraph polynomial]{Some applications of Wagner's weighted subgraph counting polynomial}

\author{Ferenc Bencs}\thanks{Supported by the NKFIH (National Research, Development and Innovation Office, Hungary) grant KKP-133921.}
\address{\small Alfr\'ed R\'enyi Institute of Mathematics\\
\small Budapest, Hungary\\
\small\tt ferenc.bencs@gmail.com}

\author{P\'eter Csikv\'ari}\thanks{Supported by the Counting in Sparse Graphs Lend\"ulet Research Group.}
\address{\small Alfr\'ed R\'enyi Institute of Mathematics and \\
\small Department of Computer Science\\
\small E\"{o}tv\"{o}s Lor\'{a}nd University\\
\small Budapest, Hungary\\
\small\tt peter.csikvari@gmail.com}

\author{Guus Regts}\thanks{Supported by a NWO Vidi grant, VI.Vidi.193.068}
\address{\small Korteweg de Vries Institute for Mathematics\\
\small University of Amsterdam\\
\small Amsterdam, The Netherlands\\
\small\tt guusregts@gmail.com}


\begin{abstract}
We use Wagner's weighted subgraph counting polynomial to prove that the partition function of the anti-ferromagnetic Ising model on line graphs is real rooted and to prove that roots of the edge cover polynomial have absolute value at most $4$. 
We more generally show that roots of the edge cover polynomial of a $k$-uniform hypergraph have absolute value at most $2^k$, and discuss applications of this to the roots of domination polynomials of graphs.
We moreover discuss how our results relate to efficient algorithms for approximately computing evaluations of these polynomials.
\end{abstract}



\maketitle

\section{Introduction}
The investigation of the location of zeros of different partition functions of graphs and hypergraphs is a topic gaining more and more interest. 
The reason for this is that these partition functions are related to several topics such as statistical physics, combinatorics and computer science.
In statistical physics absence of complex zeros near the real axis implies absence of phase transition (in the Lee-Yang sense~\cite{yang1952statistical}).
In computer science it is related to the design of efficient approximation algorithms for computing evaluations of partition functions and graph polynomials. A recent approach by Barvinok~\cite{barvinok2016} combined with results from~\cite{patel2017} shows that zero-free regions for graph polynomials imply fast (polynomial time) algorithms for approximating evaluations when restricted to bounded degree graphs. 

In this note we give two new zero-free regions, one for the anti-ferromagnetic Ising model on line graphs, and one for the edge cover polynomial. 
For both these polynomials efficient approximation algorithms were known on the positive real line. 
For the Ising model this was based on the Monte Carlo Markov chain approach~\cite{dyer2020polynomial}, and for the edge cover polynomial on correlation decay~\cite{lin2014simple,liu2014fptas} as well as on the Monte Carlo Markov chain approach~\cite{bezakova2009sampling,huang2016canonical}.
Our results yield new efficient algorithms for these polynomials on bounded degree graphs, not only for evaluations on the positive real line, but also for complex evaluations.
More importantly they further stress the connection between absence of zeros and the existence of efficient algorithms.

Both our results are based on two short applications of a general technique of Wagner~\cite{wagner2009}.

\subsection*{The Ising model on line graphs}
Our first example is the Ising model of line graphs.
Let $G = (V,E)$ denote a simple graph and let $z,  b  \in \mathbb{C}$. The \emph{partition function of the Ising model} $Z_G(z,  b )$ is defined as
\[
    Z_G(z) = Z_G(z,  b ) = 
    \sum_{U \subseteq V} z^{\lvert U \rvert} \cdot b ^{\lvert \delta(U) \rvert},
\]
where $\delta(U)$ denotes the set of edges with one endpoint in $U$ and one endpoint
in $U \setminus V$. In this paper, we fix $b >0$ and consider the partition function $Z_G(z)$ as a polynomial in $z$. 
The case $ b  < 1$ is often referred to as the \emph{ferromagnetic case},
while $ b  > 1$ is referred to as the \emph{anti-ferromagnetic} case.
	
In this paper, we will investigate the anti-ferromagnetic Ising model for the class of line graphs. 
The line graph of a graph $G$ is a graph $L(G)$ with vertices being the edges of $G$ and two edges being connected if they share a common vertex. In particular, in Section~\ref{sec:Ising} we prove the following theorems.
\begin{theorem}\label{th:main}
Let $G$ be a graph and let $b\ge 1$. Every root of $Z_{L(G)}(z,b)$ is real and negative.
\end{theorem}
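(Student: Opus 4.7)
The plan is to put $Z_{L(G)}$ into the form of a Wagner weighted subgraph polynomial over $G$ and then apply Wagner's stability theorem. Since in a simple graph the edges at each vertex $v \in V(G)$ form a clique of size $d_v = \deg_G(v)$ in $L(G)$ (with no pair double-counted), for $S \subseteq E(G) = V(L(G))$ the number of $L(G)$-edges crossing $S$ equals $\sum_v d_v^S(d_v - d_v^S)$, where $d_v^S = |S \cap E(v)|$. Hence
\begin{equation*}
Z_{L(G)}(z,b) = \sum_{S \subseteq E(G)} z^{|S|} \prod_{v \in V(G)} b^{d_v^S(d_v - d_v^S)},
\end{equation*}
which is of Wagner's form with vertex activity $\mu_v(k) = b^{k(d_v - k)}$.

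Wagner's theorem combined with the Grace--Walsh--Szeg\H{o} theorem reduces real-rootedness of $Z_{L(G)}$ to the following local claim: for each integer $d \geq 0$ and each $b \geq 1$, the univariate polynomial
\begin{equation*}
p_d(x) := \sum_{k=0}^{d} \binom{d}{k} b^{k(d-k)} x^k
\end{equation*}
has only real non-positive roots. One recognizes $p_d$ as the Ising partition function of $K_d$ (since $L(K_{1,d}) = K_d$).

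I would prove real-rootedness of $p_d$ by induction on $d$, carrying the \emph{strengthened} hypothesis that the roots $\rho^{(d)}_1 < \cdots < \rho^{(d)}_d < 0$ of $p_d$ satisfy the geometric spacing $|\rho^{(d)}_i|/|\rho^{(d)}_{i+1}| \geq b^2$. The key tools for the inductive step are the recurrence $p_d(x) = p_{d-1}(bx) + x b^{d-1} p_{d-1}(x/b)$ (which follows from $\binom{d}{k} = \binom{d-1}{k-1} + \binom{d-1}{k}$), its differentiated form $p_d'(x) = d b^{d-1} p_{d-1}(x/b)$, and palindromicity $p_d(x) = x^d p_d(1/x)$. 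The derivative identity locates the critical points of $p_d$ at $\tau_i = b\rho^{(d-1)}_i$, and the recurrence evaluated there gives $p_d(\tau_i) = p_{d-1}(b^2 \rho^{(d-1)}_i)$. The spacing hypothesis on $p_{d-1}$ places $b^2 \rho^{(d-1)}_i$ strictly between $\rho^{(d-1)}_{i-1}$ and $\rho^{(d-1)}_i$, so the signs of $p_d(\tau_i)$ alternate; together with $p_d(0) = 1$ and the sign at $\pm\infty$, this yields $d$ sign changes and hence $d$ real negative roots of $p_d$.

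The main obstacle is propagating the geometric root-spacing bound from $p_{d-1}$ to $p_d$. Palindromicity of $p_{d-1}$ gives the useful identity $|\tau_i| \cdot |\tau_{d-i}| = b^2$. Combining this with Rolle's theorem applied at indices $i$ and $d-i$ and the palindromicity $|\sigma_j| = 1/|\sigma_{d+1-j}|$ of $p_d$ (writing $\sigma_j = \rho^{(d)}_j$), one derives $|\sigma_i| > |\tau_i|$ and $|\sigma_{i+1}| = 1/|\sigma_{d-i}| < 1/|\tau_{d-i}| = |\tau_i|/b^2$. Together these give $|\sigma_i|/|\sigma_{i+1}| > b^2$, closing the induction. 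With $p_d$ real-rooted for all $d$ and $p_d(0) = 1 \neq 0$, Wagner's theorem then delivers Theorem~\ref{th:main}.
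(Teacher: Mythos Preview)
Your proposal is correct and follows essentially the same route as the paper: express $Z_{L(G)}$ as a Wagner subgraph counting polynomial on $G$, reduce via Wagner's theorem to the real-rootedness of the key polynomials $p_d(x)=\sum_k \binom{d}{k} b^{k(d-k)} x^k$, and prove the latter by induction using the recurrence $p_d(x)=p_{d-1}(bx)+xb^{d-1}p_{d-1}(x/b)$ together with a $b^2$ geometric spacing of the roots. Your $p_d$ is exactly the paper's key polynomial $K_d$ up to the scalar $b^{-\binom{d}{2}}$, and your recurrence is the paper's Lemma~\ref{recursion} after this rescaling.

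The only genuine difference is in how you verify the key lemma. The paper interlaces the zeros of $K_d(bz)$ and $zK_d(z/b)$ directly, reads off sign changes on the resulting intervals, and propagates the spacing via the obvious bound $z^{(d+1)}_{i+1}/z^{(d+1)}_i > b_{i+1}/a_i = b^2$. You instead exploit the extra identities $p_d'(x)=db^{d-1}p_{d-1}(x/b)$ and $p_d(x)=x^dp_d(1/x)$: the derivative formula pins the critical points of $p_d$ at $\tau_i=b\rho^{(d-1)}_i$, the recurrence evaluates $p_d(\tau_i)=p_{d-1}(b^2\rho^{(d-1)}_i)$, and palindromicity of both $p_{d-1}$ and $p_d$ (giving $|\tau_i||\tau_{d-i}|=b^2$ and $|\sigma_j||\sigma_{d+1-j}|=1$) lets you close the spacing induction. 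This is a pleasant variant of the same argument; it trades the paper's direct interlacing bookkeeping for the algebraic symmetry of the polynomial. One minor point: your stated hypothesis $|\rho_i|/|\rho_{i+1}|\ge b^2$ should be strict to guarantee that $b^2\rho_i$ lands \emph{strictly} between $\rho_{i-1}$ and $\rho_i$; your own propagation step in fact yields the strict inequality, so simply carry the strict form throughout (and treat $b=1$ separately, where $p_d(x)=(1+x)^d$).
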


We can also deduce some information on the location of the so-called Fisher zeros when we put a bound on the maximum degree.

\begin{theorem}\label{th:fisher}
 For any $\Delta\ge 2$ and $0\le\alpha<\pi/2$, there exists an open set $U_{\Delta,\alpha} \subseteq \mathbb{C}$ containing the interval $[1,\infty)$, such that if $G$ has maximum degree at most $\Delta$, then for $b\in U_{\Delta,\alpha}$ and $\lambda\in\{z\in\mathbb{C}~|~|\arg(z)|<\pi-2\alpha\}$ we have
 \[
    Z_{L(G)}(\lambda,b)\neq 0.
 \]
 In particular, $Z_{L(G)}(1,b)\neq 0$ for all $b\in U_{\Delta,\alpha}$.
\end{theorem}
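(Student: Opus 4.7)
The idea is to lift the real-rootedness of Theorem~\ref{th:main} to zero-freeness in a complex neighborhood of $[1,\infty)$, using openness of Wagner's zero-free region in its edge-weight variables.

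First, I observe that Theorem~\ref{th:fisher} is automatic for real $b \in [1,\infty)$: by Theorem~\ref{th:main} every zero of $z \mapsto Z_{L(G)}(z,b)$ lies in $(-\infty,0)$, hence outside the sector $\{z : |\arg z| < \pi - 2\alpha\}$ for every $\alpha \geq 0$. In particular $\lambda = 1$ belongs to this sector for all $\alpha \in [0,\pi/2)$, so the ``In particular'' assertion drops out as soon as the neighborhood $U_{\Delta,\alpha}$ of $[1,\infty)$ has been produced.

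To build $U_{\Delta,\alpha}$, I would revisit the proof of Theorem~\ref{th:main} in Section~\ref{sec:Ising}. That proof expresses $Z_{L(G)}(z,b) = W_H(\mathbf{x})$ for an auxiliary graph $H$ built from $G$, where $W_H$ is Wagner's multivariate subgraph-counting polynomial evaluated at complex edge weights $x_e = x_e(z,b)$, and deduces nonvanishing from Wagner's theorem: $W_H(\mathbf{x}) \neq 0$ whenever each $x_e$ lies in a certain \emph{open} region $\Omega \subseteq \mathbb{C}$. For real $b \in [1,\infty)$ and $\lambda$ in the sector, one has $x_e(\lambda,b) \in \Omega$ for every $e$. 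Since $\Omega$ is open and the map $(b,\lambda) \mapsto x_e(b,\lambda)$ is continuous, a slight complex perturbation of $b$ still produces weights in $\Omega$. Taking a union over $b_0 \in [1,\infty)$ of such complex neighborhoods yields the required $U_{\Delta,\alpha}$.

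The main obstacle is uniformity in $G$: a bare pointwise continuity argument would produce neighborhoods depending on the specific polynomial (and hence on $|V(G)|$), and these might shrink catastrophically as the graph grows. The bounded-degree hypothesis circumvents this precisely because $x_e(z,b)$ is a \emph{local} expression, depending only on the immediate neighborhood of $e$ in $H$, which in turn is controlled by $\Delta$; one therefore extracts a $G$-independent modulus of continuity for $b \mapsto x_e(\lambda,b)$. The role of $\alpha$ is that a $\lambda$ approaching $(-\infty,0)$ leaves less slack for complex perturbation of $b$, and the constraint $|\arg\lambda| < \pi - 2\alpha$ preserves enough slack (the factor of $2$ being the typical loss in an Asano-/Grace-type contraction through the Wagner representation). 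A compactness argument on $\lambda$, restricted to the closed subsectors $\{|\arg z| \leq \pi - 2\alpha\}$ intersected with compact cores, together with the uniform continuity of $b \mapsto x_e$, completes the extraction of $U_{\Delta,\alpha}$.
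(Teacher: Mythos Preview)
Your high-level strategy---perturb the real-rooted case by continuity and use the degree bound for uniformity in $G$---is exactly the paper's. But you have placed the $b$-dependence in the wrong part of Wagner's machinery, and this leads you to an unnecessarily intricate plan.

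In the representation from Lemma~\ref{cl:express}, $Z_{L(G)}(z,b)=b^{|E(L(G))|}Z_\mathcal{W}(G,1,u;z^{1/2})$ has \emph{edge weights} $\lambda_e\equiv 1$; there are no ``$x_e(z,b)$'' to perturb. The dependence on $b$ lives entirely in the vertex coefficients $u^{(v)}_i=b^{-\binom{d_G(v)-i}{2}-\binom{i}{2}}$, i.e.\ in the key polynomials $K_d(z)$. Since $d_G(v)\le\Delta$, there are at most $\Delta$ distinct key polynomials, and they do not depend on $G$ at all. This is what delivers uniformity, cleanly and without any modulus-of-continuity or compactness-in-$\lambda$ argument.

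Concretely, the paper regards $K_d(z,t)=\sum_i\binom{d}{i}t^{\binom{d-i}{2}+\binom{i}{2}}z^i$ for $1\le d\le\Delta$ as finitely many polynomials in $z$ with parameter $t$. For $t=b^{-1}$ with $b\ge 1$, Lemma~\ref{lemma:realroot} puts all their roots in $(-\infty,0)\subset\{|\arg z|>\pi-\alpha\}$. Continuity of roots in $t$ (for these finitely many polynomials) yields, for each $b\ge1$, an open $V_b\ni b^{-1}$ on which all roots remain in $\{|\arg z|>\pi-\alpha\}$; set $U_{\Delta,\alpha}=\bigcup_{b\ge1}V_b^{-1}$. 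For any $b'\in U_{\Delta,\alpha}$ every key polynomial is then $\mathcal{S}[\pi-\alpha]$-nonvanishing, and Theorem~\ref{th:wagner}(i) gives $Z_{L(G)}(\lambda,b')\neq 0$ for \emph{all} $\lambda$ with $|\arg\lambda|<\pi-2\alpha$ in one stroke. No separate compactness argument in $\lambda$ is needed: once the key polynomials are controlled, Wagner's theorem hands you the entire sector.
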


A way to interpret the previous theorem is that there is no phase transition of the anti-ferromagnetic Ising-model on line graphs in the Fisher~\cite{fisher} sense. This phenomenon was already observed by Sy{\^o}zi in \cite{syozi1951statistics} for the Kagom\'e-lattice, which is the line graph of the hexagonal lattice. 
Recently, a variant of the absence of phase transitions for line graphs was also proven by Dyer, Heinrich, Jerrum, and M{\"u}ller~\cite{dyer2020polynomial}. 
In particular, they proved that for any choice of $b$ and $\lambda$, there exists a fully polynomial time randomized algorithm to approximate $Z_{L(G)}(\lambda,b)$. 
Using Theorem~\ref{th:fisher}, one can show that with a uniform bound on the maximum degree, we can obtain a deterministic algorithm for the same task.
Let us briefly explain.
A successful approach for obtaining an approximation algorithm was proposed by Barvinok \cite{barvinok2016}, based on truncating the Taylor series of the logarithm of partition functions over a connected zero-free domain. 
In \cite{patel2017}, this method was improved so as to run in polynomial time on bounded degree graphs.
By combining this approach (see also~\cite{liu2019ising}) with the previous corollary and the existence of a zero-free disk around zero from \cite[Remark 24]{peters2020location}, we obtain the following corollary. 

\begin{corollary}\label{cor:FPTAS}
 For   $\Delta\ge 2$ and $0\le \alpha<\pi/2$, let $U_{\Delta,\alpha}$ given by Theorem~\ref{th:fisher}. Let $b\in U_{\Delta,\alpha}$ and $\xi\in\{z\in\mathbb{C}~|~|\arg(z)|<\pi-2\alpha\}$. Then for any $\varepsilon>0$, there exists an algorithm that given an $n$-vertex graph $G$ of maximum degree at most $\Delta$, computes a multiplicative $\varepsilon$-approximation\footnote{A multiplicative \emph{$\varepsilon$-approximation} to a nonzero complex number $e^{a}$ is a number $e^{b}$ such that $|a-b|\leq \varepsilon$.} to $Z_{L(G)}(\xi,b)$  in time polynomial in $n/\varepsilon$.
\end{corollary}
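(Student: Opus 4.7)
The plan is to combine the zero-freeness of Theorem~\ref{th:fisher} with Barvinok's polynomial interpolation method~\cite{barvinok2016} and the efficient coefficient-computation of Patel and Regts~\cite{patel2017}. Fix $b \in U_{\Delta,\alpha}$ and view $p(\lambda) := Z_{L(G)}(\lambda,b)$ as a polynomial in $\lambda$. By Theorem~\ref{th:fisher}, $p$ has no zeros in the open sector $S_\alpha := \{z \in \mathbb{C} : |\arg(z)| < \pi - 2\alpha\}$, and by~\cite[Remark 24]{peters2020location} there is an open disk $D$ around $0$, of radius depending only on $\Delta$ and $b$, on which $p$ does not vanish. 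Both $S_\alpha$ and $D$ contain an initial segment of the positive real axis near $0$, so the union $\Omega := S_\alpha \cup D$ is a connected open set containing both $0$ and $\xi$. Since $p(0) = 1$, the function $f(\lambda) := \log p(\lambda)$ is well defined and analytic on $\Omega$.

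Next I would apply the standard polynomial-interpolation step: an additive $\varepsilon$-approximation to $f(\xi)$ is exactly a multiplicative $\varepsilon$-approximation to $p(\xi)$. Choose a conformal map $\phi$ from the open unit disk into $\Omega$ with $\phi(0)=0$ and $\phi(t_0) = \xi$ for some $t_0 \in (0,1)$; then $f \circ \phi$ is analytic on the unit disk, and truncating its Taylor series at $0$ after $m = O_{\Omega,\xi}(\log(n/\varepsilon))$ terms gives the required approximation. The constants hidden in the $O$-notation depend on the geometry of $\Omega$ and the location of $\xi$, but not on $G$.

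It remains to compute the first $m$ Taylor coefficients of $f$ at $0$ in time polynomial in $n$. By Newton's identities this reduces to computing the first $m$ coefficients of $p$ at $0$, which are $[\lambda^k] p(\lambda) = \sum_{U \subseteq V(L(G)),\,|U|=k} b^{|\delta_{L(G)}(U)|}$ for $k \le m$. Each such sum can be expanded as a linear combination of counts of small connected induced subgraphs of $L(G)$, with coefficients depending only on $b$ and the isomorphism type of the subgraph. Because $G$ has maximum degree at most $\Delta$, $L(G)$ has maximum degree at most $2(\Delta - 1)$, so the Patel--Regts framework~\cite{patel2017} computes all such induced subgraph counts on $O(\log n)$ vertices in polynomial time, yielding the claimed algorithm.

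The main obstacle I expect is the geometric bookkeeping: one needs to verify that $\Omega$ is connected and admits a conformal parametrization whose quantitative parameters (the point $t_0$ and the Bernstein-type constants controlling the truncation error) depend only on $\Omega$ and $\xi$, not on the input graph, so that the Barvinok error estimates are uniform in $G$. Once this is in place, the corollary follows by a direct application of the by-now standard Barvinok--Patel--Regts paradigm.
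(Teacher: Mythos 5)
Your proposal is correct and follows exactly the route the paper intends: the paper does not spell out a proof of this corollary but simply combines the sector zero-freeness of Theorem~\ref{th:fisher} with the zero-free disk around $0$ from \cite[Remark 24]{peters2020location} (also rederived in the paper's ``Disk around zero'' subsection) and then invokes the Barvinok interpolation method together with the Patel--Regts coefficient computation for bounded-degree (line) graphs. Your additional bookkeeping about connectivity of the zero-free region and the degree bound $2(\Delta-1)$ for $L(G)$ is accurate and fills in precisely the details the paper leaves implicit.
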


\subsection*{Edge cover polynomial}
The second polynomial we consider is the \emph{edge cover polynomial}, which was introduced for graphs in \cite{akbari2013edge}. 
We consider here the obvious extension to hypergraphs.

Let $\mathcal{H}=(V,E)$ be a hypergraph. A subset of edges $F\subseteq E$ is called an \emph{edge cover} if each vertex of $\mathcal{H}$ is contained in at least one edge of $F$. 
We define the \emph{edge cover polynomial} of $\mathcal{H}$ as
\[
    \mathcal E(\mathcal{H},z)=\sum_{\textrm{$F\subseteq E$ edge cover}}z^{|F|}.
\]

In \cite{csikvari2011roots} it was proved that all the complex zeros of the edge cover polynomial of ordinary graphs are contained in the open disk of radius $\frac{(2+\sqrt{3})^2}{1+\sqrt{3}}$. Moreover, they showed that if the minimum degree is large enough, then the zeros are contained in the open disk of radius $4$. The authors conjectured that the actual bound will be $4$ for all graphs, cf.~\cite[Conjecture~5.1]{csikvari2011roots}.

Here we confirm this conjecture (see Section~\ref{sec:edge cover} for the proof):

\begin{theorem}\label{thm:edge cover}
Let $\mathcal{H}$ be a hypergraph with largest edge of size $k$ without isolated vertices. Then
\begin{itemize}
\item[(i)]  $\mathcal E(\mathcal{H},z)\neq 0$ if $|z|>2^{k}$,
\item[(ii)] if moreover $k=2$, then $\mathcal E(G,z)\neq 0$ if $|z|\geq 4$.
\end{itemize}
\end{theorem}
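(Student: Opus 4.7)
My plan is to prove the stronger multivariate statement and specialize. Introduce
\[
\mathcal E(\mathcal H,\mathbf z)=\sum_{F\text{ edge cover}}\prod_{e\in F}z_e,
\]
which is multiaffine in the edge variables. It suffices to show that $\mathcal E(\mathcal H,\mathbf z)\neq 0$ whenever $|z_e|>2^k$ for every edge $e$, since the univariate specialization $z_e=z$ then gives~(i). To analyze the multivariate polynomial, I would realize it as an Asano contraction of a product of \emph{local} polynomials. For each vertex $v$, introduce independent copies $z_e^{(v)}$ of $z_e$ (one per vertex-edge incidence) and define
\[
p_v(\mathbf z^{(v)})=\prod_{e\ni v}\bigl(1+z_e^{(v)}\bigr)-1.
\]
The expansion $\prod_v p_v=\sum_{(A_v)_v:A_v\neq\emptyset}\prod_v\prod_{e\in A_v}z_e^{(v)}$ enumerates, for each vertex, a nonempty set of incident edges. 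Performing, for each edge $e$, the Asano contraction that identifies the $|e|$ copies $\{z_e^{(v)}\}_{v\in e}$ into a single variable $z_e$ retains exactly the ``all or nothing'' terms at each edge, forcing the local choices to be compatible. The surviving terms are indexed by edge covers $F$, and a short calculation gives $\mathcal E(\mathcal H,\mathbf z)$ on the nose.

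The zero-free region is then obtained in two stages. First, a direct estimate bounds the zeros of each local polynomial: if $|z_e^{(v)}|>2$ for all $e\ni v$, then $|1+z_e^{(v)}|\ge|z_e^{(v)}|-1>1$, so $\bigl|\prod_{e\ni v}(1+z_e^{(v)})\bigr|>1$ and hence $p_v\neq 0$. Consequently $\prod_v p_v$ is zero-free on the polydisk $\{|z_e^{(v)}|>2\}$. Second, I invoke the quantitative Asano principle: contracting $k$ multiaffine variables, each lying outside a disk of radius $r$, yields a single variable outside a disk of radius $r^k$. Because every edge of size (at most) $k$ contributes exactly that many copies to be contracted, the zero-free region of $\mathcal E(\mathcal H,\mathbf z)$ becomes $\{|z_e|>2^k\}$. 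Specializing $z_e=z$ then gives~(i).

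For part~(ii), one has $k=2$ and the above argument yields $|z|>4$; to upgrade this to $|z|\ge 4$, I would carry out a boundary analysis of both the local bound and the Asano step. The inequality $|1+z_e^{(v)}|\ge|z_e^{(v)}|-1$ is an equality only when $z_e^{(v)}$ is a negative real multiple of $1$, and the corresponding Asano contraction attains its extreme only under an analogous alignment. Tracing these equality conditions back to the graph would show that a zero on $|z|=4$ forces a very rigid configuration that is incompatible with the assumption of no isolated vertices, closing the circle.

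The main obstacle I foresee is the Asano step itself: one must verify the quantitative contraction bound for $k$ simultaneous variables (not just the classical two-variable Asano lemma) and keep careful track of how the constant term and the degree-$k$ coefficient of the local factor $p_v$ rescale under contraction so that the product of the $k$ local radii $2$ cleanly becomes $2^k$. The second delicate point is the equality analysis for~(ii), where one must rule out any simultaneous attainment of the extremal phases along all vertex-edge incidences; without this analysis the proof would stop at the strict bound $|z|>4$ and miss the sharp inequality in the theorem.
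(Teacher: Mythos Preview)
Your approach to part~(i) is correct and takes a somewhat different route from the paper's primary proofs. The paper expresses $\mathcal E(\mathcal H,z)$ as a specialization of Wagner's weighted subgraph counting polynomial and invokes Wagner's master theorem, whose engine is iterated Schur--Szeg\H{o} composition of the key polynomials $K_d(z)=(1+z)^d-1$ (which are $2\mathcal E$-nonvanishing). You instead split each edge variable into $|e|$ independent multiaffine copies, note that the product of the local factors $p_v=\prod_{e\ni v}(1+z_e^{(v)})-1$ is zero-free on $\{|z_e^{(v)}|>2\}$, and then apply the multiaffine Asano--Ruelle contraction edge by edge. This is closer in spirit to the paper's proof of the stronger cardioid bound (Section~3.3), where they work with the unsplit polynomial $\prod_{v}\bigl(\prod_{e\ni v}(1+z_e)-1\bigr)$ and apply a univariate degree-$|e|$ Asano step at each edge. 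Your multiaffine formulation is a clean variant of that idea; both yield the per-edge bound $|z_e|>2^{|e|}$, hence $|z|>2^k$ after diagonal specialization. What Wagner's framework buys is a packaged statement; what your direct Asano approach buys is that it requires no black box beyond the two-variable contraction lemma iterated $|e|-1$ times.

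Your sketch for part~(ii), however, has a genuine gap. The boundary analysis you propose cannot close: at $z_e^{(v)}=-2$ one has $1+z_e^{(v)}=-1$, so $p_v=(-1)^{d_G(v)}-1$, which \emph{vanishes} whenever $d_G(v)$ is even. Thus the local factors can be zero on the boundary, and no phase-alignment argument rules this out. The paper handles~(ii) by two steps your sketch does not anticipate. First, a refinement of the Asano argument (tracking the region $\{-(1-\alpha)^2:|\alpha|\le 1\}$ rather than a disk) shows that the \emph{only} point with $|z|=4$ that could possibly be a root is $z=-4$. Second, a dedicated deletion/contraction induction rules out $z=-4$: if $\mathcal E(G,S,-4)=0$, then for any edge $e=(u,v)$ the quadratic in the corresponding edge variable is forced to have a double root at $-2$, and comparing its linear and constant coefficients yields either $\mathcal E(G-e,S\cup\{u,v\},-4)=0$ or $\mathcal E(G/e,S,-4)=0$, producing a smaller counterexample. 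Without both the cardioid refinement (to isolate $-4$) and this induction, your method stops at the strict inequality $|z|>4$.
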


In fact we can prove a more refined result, which for graphs gives a zero-free region containing  the positive real line. We state here only the result for graphs, see Section~\ref{sec:edge cover} for the extension to hypergraphs and its proof.

\begin{theorem}\label{thm:edge cover strong}
Let $G$ be graph without isolated vertices. Then all roots of $\mathcal{E}(G,z)$ are contained in the set $\{-(1-\alpha)^2\mid |\alpha|\leq 1\}.$
\end{theorem}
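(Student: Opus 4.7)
The plan is to establish a multivariate strengthening of the statement and then specialize. For each edge $e$ of $G$, I introduce a variable $\alpha_e$ and work with the multivariate edge cover polynomial
\[
\mathcal{E}(G;\mathbf{w}) = \sum_{F \text{ edge cover of } G} \prod_{e \in F} w_e, \qquad w_e := -(1-\alpha_e)^2.
\]
By inclusion--exclusion over the set of vertices left uncovered by $F$ one obtains
\[
\mathcal{E}(G;\mathbf{w}) = \sum_{S\subseteq V}(-1)^{|S|}\prod_{e \in E(V\setminus S)}(1+w_e),
\]
and under the substitution the factor decomposes cleanly as $1+w_e = \alpha_e(2-\alpha_e)$. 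The multivariate goal then becomes: show that $\mathcal{E}(G;\mathbf{w})\neq 0$ whenever each $\alpha_e$ satisfies simultaneously $|\alpha_e|>1$ and $|2-\alpha_e|>1$. Specializing $\alpha_e = \alpha$ for all $e$ would immediately yield $\mathcal{E}(G,-(1-\alpha)^2)\neq 0$ for $\alpha$ outside both unit disks, which is equivalent to every root of $\mathcal{E}(G,z)$ lying in the image of the closed unit disk under $\alpha\mapsto -(1-\alpha)^2$.

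To prove the multivariate statement, I would invoke the Wagner polynomial framework. The factorization $\alpha_e(2-\alpha_e)$ naturally splits each edge's contribution into two ``half-edge'' factors, one per endpoint, and this lets me rewrite the polynomial as a specialization of Wagner's weighted subgraph counting polynomial in which each vertex $v$ is assigned a local polynomial built from the half-edge variables at the edges incident to $v$. Wagner's technique then yields a stability statement: if each local vertex polynomial lies in the appropriate half-plane--preserving class, the global polynomial is nonvanishing on a corresponding product region.

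The main technical obstacle is to verify that the local vertex polynomial arising from the edge-cover constraint $\deg_F(v)\ge 1$ has the required stability, and that the resulting zero-free product region in $\boldsymbol{\alpha}$-space is exactly $\{(\alpha_e):\ |\alpha_e|>1,\ |2-\alpha_e|>1 \text{ for all } e\}$. The involution $\alpha\leftrightarrow 2-\alpha$, under which $\alpha(2-\alpha)$ (and hence the polynomial) is symmetric, should be the key to identifying the two tangent disks as the natural exceptional set. After a careful analysis of the vertex-local generating functions in the spirit of the Ising-model argument of Section~\ref{sec:Ising}, specialization to uniform $\alpha_e = \alpha$ completes the proof.
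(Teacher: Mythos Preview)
Your inclusion--exclusion identity and the substitution $1+w_e=\alpha_e(2-\alpha_e)$ are correct, and the observation that $z$ lies outside the cardioid exactly when both preimages $\alpha$ and $2-\alpha$ lie outside the unit disk is the right reformulation of the target region. The gap is in the next step.

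You propose to ``split each edge's contribution into two half-edge factors, one per endpoint'' and then feed this into Wagner's Theorem~\ref{th:wagner}. But there is no canonical assignment of $\alpha_e$ to one endpoint and $2-\alpha_e$ to the other: the endpoints of an edge are interchangeable while the two factors are not, so no \emph{vertex} variable $x_v$ arises this way. More seriously, the region you need in the $\alpha_e$-variables, namely $\{|\alpha_e|>1\}\cap\{|2-\alpha_e|>1\}$, is an intersection of two disk exteriors, not a single disk, disk exterior, or sector; Theorem~\ref{th:wagner} simply does not produce such product regions. Indeed, applying Theorem~\ref{th:wagner} directly to the edge-cover key polynomials $K_d(z)=(1+z)^d-1$ (which are only $2\mathcal{E}$-nonvanishing) is exactly what yields the weaker disk bound of Theorem~\ref{thm:edge cover}(i), not the cardioid. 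The paper says this explicitly: for the cardioid ``we will not use Theorem~\ref{th:wagner}, but a similar technique.''

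What is actually needed is the vertex-factored base polynomial
\[
\Omega_{G,\emptyset}(\mathbf{z})=\prod_{v\in V}\Bigl(\prod_{e\ni v}(1+z_e)-1\Bigr),
\]
which is nonvanishing on $B^m$ with $B=\{|1+z|>1\}$, followed by an iterated Asano-contraction in each edge variable (Lemma~\ref{lemma:asano}). The contraction sends a degree-$2$ factor with both roots in $-B^c$ to a linear factor whose root lies in $(-B^c)^2$, and tracking this carefully through an induction (with the relaxed-cover parameter $S$ to guarantee the leading coefficient never vanishes) is what produces the cardioid $T_2$. Your sketch contains neither this base polynomial nor the contraction step, and the vague appeal to ``the spirit of the Ising-model argument'' points to the wrong part of the paper: the Ising argument uses real-rootedness of key polynomials, which plays no role here.
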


See Figure~\ref{fig:cardioid} below for a picture of the set in Theorem~\ref{thm:edge cover strong}.
\begin{figure}[ht]
 \centering
 \includegraphics[width=0.4\textwidth]{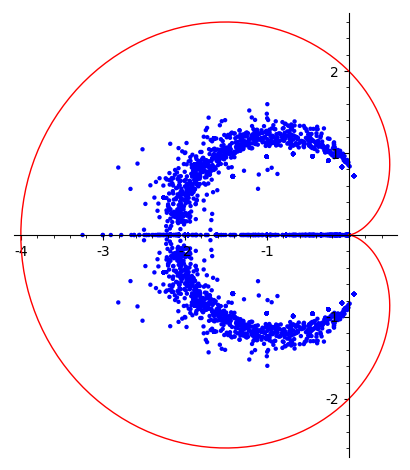}
 \caption{Roots of edge cover polynomial of some graphs on $10$ vertices and the boundary of the set $\{-(1-\alpha)^2\mid |\alpha|\leq 1\}.$}
\end{figure}\label{fig:cardioid}

Combining the theorem above with Barvinok's method~\cite{barvinok2016} and the improvement from~\cite{patel2017}, we obtain a fully polynomial time approximation scheme for approximating the edge cover polynomial on the the complement of the set $\{-(1-\alpha)^2\mid |\alpha|\leq 1\}$ on bounded degree graphs. To do this one needs to interpolate from `infinity'. Equivalently, one can interpolate the independence polynomial of the dual hypergraph (see below) from zero.
We omit further details, but refer to~\cite{barvinok2016,patel2017}, noting that in ~\cite{patel2017} hypergraphs are not considered but the extension of the approach from~\cite{patel2017} to hypergraphs is laid out in~\cite{liu2019ising}.
We note that on the real line this a gives a completely different algorithm than the one in~\cite{lin2014simple,liu2014fptas}, which is based on decay of correlations.
See also~\cite{bezakova2009sampling,huang2016canonical} for randomized algorithms based on Markov chains.

\subsubsection*{Independent sets}
It is important to note that the notion of an edge cover of a hypergraph $\mathcal{H}$ is strongly related to independent sets of hypergraphs. We call a set of vertices $A\subseteq V$ an \emph{independent set}, if no subset of $A$ forms an edge of $\mathcal{H}$ (in~\cite{berge} this is called a \emph{weak independent set}). In other words, a set $A$ is independent if and only if any edge of $\mathcal{H}$ contains at least one vertex from $V\setminus A$.  Such a set is called a  \emph{vertex cover}. 
Let us denote the \emph{independence polynomial} of $\mathcal{H}$ by
\[
    I(\mathcal{H},z)=\sum_{A\subseteq V \textrm{independent}} z^{|A|}.
\]
A natural way to describe a vertex cover of a hypergraph $\mathcal{H}$ is to consider an edge cover in the \emph{dual hypergraph} $\mathcal{H}^T$ with vertex set $E$ and edges $\{\{e\in E~|~ v\in e\} ~|~ v\in V\}$. Then it is not hard to see that
\[
    I(\mathcal{H},z)=z^{|V|}\mathcal{E}(\mathcal{H}^T,1/z).
\]
Thus we have the following corollary.

\begin{corollary}
 Let $\mathcal{H}$ be a hypergraph of degree at most $\Delta$. If $|z|<2^{-\Delta}$, then 
 \[
    I(\mathcal{H},z)\neq 0.
 \]
\end{corollary}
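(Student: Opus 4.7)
The plan is to reduce the claim directly to Theorem~\ref{thm:edge cover}(i) via the dual-hypergraph identity $I(\mathcal{H},z)=z^{|V|}\mathcal{E}(\mathcal{H}^T,1/z)$ that has just been recorded. First I would unpack the definition of $\mathcal{H}^T$: its vertex set is $E(\mathcal{H})$, and for each $v\in V(\mathcal{H})$ it contains the edge $\{e\in E(\mathcal{H})\mid v\in e\}$, whose cardinality is exactly the degree $\deg_{\mathcal{H}}(v)$. Hence the largest edge size of $\mathcal{H}^T$ equals the maximum degree $\Delta$ of $\mathcal{H}$. Moreover, an isolated vertex of $\mathcal{H}^T$ would correspond to an empty edge of $\mathcal{H}$, so (under the standing assumption that every edge of $\mathcal{H}$ is nonempty) $\mathcal{H}^T$ has no isolated vertices, and Theorem~\ref{thm:edge cover}(i) is applicable to $\mathcal{H}^T$ with parameter $k=\Delta$.

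Next I would carry out the transport of the zero-free region. Suppose $z_0\in\mathbb{C}$ satisfies $I(\mathcal{H},z_0)=0$. The identity above gives either $z_0=0$ or $\mathcal{E}(\mathcal{H}^T,1/z_0)=0$. The first possibility is excluded because $I(\mathcal{H},0)=1$ (the empty set is always independent), so $1/z_0$ must be a root of $\mathcal{E}(\mathcal{H}^T,\cdot)$. By Theorem~\ref{thm:edge cover}(i) applied to $\mathcal{H}^T$ this forces $|1/z_0|\leq 2^{\Delta}$, i.e.\ $|z_0|\geq 2^{-\Delta}$. Taking the contrapositive yields exactly the statement: if $|z|<2^{-\Delta}$ then $I(\mathcal{H},z)\neq 0$.

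The corollary is essentially a one-line consequence of the duality combined with the bound $2^k$ that was the main work of Theorem~\ref{thm:edge cover}, so I do not anticipate any real obstacle. The only bookkeeping requiring a moment of care is the translation ``maximum degree of $\mathcal{H}$'' $\leftrightarrow$ ``maximum edge size of $\mathcal{H}^T$'' and the verification that the no-isolated-vertex hypothesis of Theorem~\ref{thm:edge cover}(i) is inherited by $\mathcal{H}^T$ from the nonemptiness of the edges of $\mathcal{H}$.
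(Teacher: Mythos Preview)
Your argument is correct and is exactly the intended one: the paper records the identity $I(\mathcal{H},z)=z^{|V|}\mathcal{E}(\mathcal{H}^T,1/z)$ and states the corollary immediately, leaving the (routine) translation you spell out to the reader. The one tiny imprecision is that the maximum edge size of $\mathcal{H}^T$ equals the maximum degree of $\mathcal{H}$, which is \emph{at most} $\Delta$ rather than necessarily equal to it, but Theorem~\ref{thm:edge cover}(i) only improves with smaller $k$, so this does not affect the conclusion.
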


\subsubsection*{Domination polynomials}
As an additional application we obtain a  bound on the roots of the total domination and domination polynomial that is independent of the number of vertices.

For a graph $G$ a set $S\subseteq V(G)$ is called a dominating set if for every $u\in V(G)$ we have either $u\in S$ or there exists a neighbor $v\in S$ of $u$. Let $d_k(G)$ denote the number of dominating sets of size $k$ in $G$. The domination polynomial is defined as $D(G,z)=\sum_kd_k(G)z^k$.  See~\cite{alikhani2009introduction,akbari2010zeros,kotek2012recurrence} for further details.

Similarly, for a graph $G$ a set $S\subseteq V(G)$ is called a total dominating set if for every $u\in V(G)$ there exists a neighbor $v\in S$ of $u$. Let $d^{(t)}_{k}(G)$ denote the number of dominating sets of size $k$ in $G$. The total domination polynomial is defined as $D_t(G,z)=\sum_kd^{(t)}_{k}(G)z^k$. See~\cite{chaluvaraju2014total,dod2016graph} for further details.

In \cite{oboudi2016roots} it is shown that $D(G,z)$ has all its complex zeros in a disk of radius $\sqrt[\delta+1]{2^n-1}$  around $-1$. (Here $\delta$ denotes the minimum degree.)
 In \cite{jafari2020roots} the authors showed a similar bound for $D_t(G,z)$, namely all the complex zeros are in the disk of radius $\sqrt[\delta]{2^n-1}$ around $-1$.
Observe that both bounds depend on the number of vertices of the graph.

For a graph $G$ and a  vertex $v$ let $N_G(v)$ denote the set of neighbors of $v$, that is, $N_G(v)=\{u\in V ~|~ (u,v)\in E\}.$ Let $N_G[v]=N_G(v)\cup \{v\}$, this is called the closed neighborhood of $v$.  Let us define the hypergraph $\mathcal{D}_G$ (resp. $\mathcal{D}_{G,t}$)   on the vertex set $V(G)$  with edges $\{N_G[v]~|~v\in V(G)\}$ (resp. $\{N_G(v)~|~v\in V(G)\}$). Now observe that the (total) domination polynomial of $G$ is an edge cover polynomial of 
$\mathcal{D}_G$ (resp. $\mathcal{D}_{G,t}$), that is, $D(G,z)=\mathcal{E}(\mathcal{D}_G,z)$ and $D_{t}(G,z)=\mathcal{E}(\mathcal{D}_{G,t},z)$.
By specializing Theorem~\ref{thm:edge cover} we obtain that the roots of $D(G,z)$ (resp. $D_{t}(G,z)$) are contained in a disk of radius $2^{\Delta(G)+1}$ (resp. $2^{\Delta(G)}$) around $0$, assuming the graph $G$ has no isolated vertices. (Here $\Delta$ denotes the maximum degree.)

\begin{corollary}
Let $G$ be a graph without isolated vertex and with maximum degree $\Delta$. Then the roots of the domination polynomial of $G$ are contained in the disk of radius $2^{\Delta(G)+1}$.
The roots of the total domination polynomial of $G$ are contained in the disk of radius $2^{\Delta(G)}$.
 \end{corollary}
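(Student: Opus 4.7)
The plan is to directly specialize Theorem~\ref{thm:edge cover}(i) to the two auxiliary hypergraphs $\mathcal{D}_G$ and $\mathcal{D}_{G,t}$ introduced in the paragraph preceding the corollary, using the identifications $D(G,z)=\mathcal{E}(\mathcal{D}_G,z)$ and $D_{t}(G,z)=\mathcal{E}(\mathcal{D}_{G,t},z)$ that are already recorded there.

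First I would check that the hypothesis of Theorem~\ref{thm:edge cover} (absence of isolated vertices) is met. For $\mathcal{D}_G$, every vertex $v\in V(G)$ lies in its own edge $N_G[v]$, so no vertex is isolated. For $\mathcal{D}_{G,t}$, a vertex $v$ lies in the edge $N_G(u)$ exactly when $u$ is a neighbor of $v$ in $G$; the assumption that $G$ has no isolated vertex guarantees the existence of such a $u$ for every $v$, so $\mathcal{D}_{G,t}$ has no isolated vertex either.

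Next I would bound the maximum edge size $k$ of each hypergraph. In $\mathcal{D}_G$ the edge $N_G[v]$ has size $\deg_G(v)+1\le\Delta(G)+1$, so $k\le \Delta(G)+1$. In $\mathcal{D}_{G,t}$ the edge $N_G(v)$ has size $\deg_G(v)\le\Delta(G)$, so $k\le\Delta(G)$. Plugging these bounds into Theorem~\ref{thm:edge cover}(i) yields respectively that every root of $D(G,z)=\mathcal{E}(\mathcal{D}_G,z)$ satisfies $|z|\le 2^{\Delta(G)+1}$ and every root of $D_t(G,z)=\mathcal{E}(\mathcal{D}_{G,t},z)$ satisfies $|z|\le 2^{\Delta(G)}$, which is exactly the claim.

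There is essentially no obstacle here beyond checking the two routine hypotheses above; the content of the corollary lies entirely in Theorem~\ref{thm:edge cover}, and the work is confined to identifying the correct edge size in each of the two hypergraph encodings of (total) domination.
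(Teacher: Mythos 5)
Your proposal is correct and follows exactly the paper's route: the paper proves this corollary precisely by invoking the identifications $D(G,z)=\mathcal{E}(\mathcal{D}_G,z)$ and $D_t(G,z)=\mathcal{E}(\mathcal{D}_{G,t},z)$ and then specializing Theorem~\ref{thm:edge cover}(i) with largest edge size at most $\Delta+1$, respectively $\Delta$. Your explicit verification of the no-isolated-vertex hypothesis for the two auxiliary hypergraphs is a detail the paper leaves implicit, but it is the same argument.
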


\subsection*{Organization}
In the next section we recall some notation and the main result of Wagner~ \cite{wagner2009}.
In Section~\ref{sec:edge cover} we prove our results for the edge cover polynomial, and in Section~\ref{sec:Ising} we prove our results for the Ising model. In the final section we close with an open question and some remarks.


\section{Preliminaries}
Let us recall some notations and results of Wagner \cite{wagner2009}.

Let $\mathcal{A}\subset \mathbb{C}$.
We say that a multivariate polynomial $p$ with variables $z_1,\dots,z_n$ is $\mathcal{A}$-nonvanishing, if  either $p$ is constant zero or $p(z_1,\dots,z_n)\neq 0$ if all $z_i\in \mathcal{A}$.

Also we denote
\begin{itemize}
 \item the sector $\mathcal{S}[\theta]=\{z\in\mathbb{C}~|~|\arg(z)|<\theta\}$ for some $0\le\theta< \pi$;
 \item the open interior of disk $\kappa\mathcal{D}=\{z\in\mathbb{C}~|~|z|<\kappa\}$ for some $0<\kappa$;
 \item the open exterior of a disk $\rho\mathcal{E}=\{z\in\mathbb{C}~|~|z|>\rho\}$ for some $0<\rho<\infty$.
\end{itemize}

In what follows we will use the degree sequence and the degree of a vertex. To emphasize the difference, we will use $\deg(\mathcal{H})$ for the degree sequence of a hypergraph $\mathcal H$, and $d_{\mathcal H}(v)$ for the degree of a vertex $v$. 
Let us fix a hypergraph $\mathcal{H}=(V,E)$. Then associate to each vertex $v$ of $\mathcal{H}$ a sequence of complex numbers $u^{(v)}=(u^{(v)}_0,\dots,u^{(v)}_{d_\mathcal{H}(v)})$ and to every edge $e$ associate a complex number $\lambda_e$. We define the (multivariate) subgraph counting polynomial of $\mathcal H$ with variables $x_v$, $v\in V$, as
\[
  Z_\mathcal{W}(\mathcal{H},\lambda,u;x)=\sum_{F\subseteq E}\lambda^Fu_{\textrm{deg}(F)}x^{\textrm{deg}(F)},
\]
where
$\lambda^F=\prod_{e\in F}\lambda_e$, $u_{\textrm{deg}(F)}=\prod_{v\in V}u^{(v)}_{d_F(v)}$ and $x^{\textrm{deg}(F)}=\prod_{v\in V}x_v^{d_F(x)}$.

The strategy in \cite{wagner2009} to obtain a zero-free region for $Z_\mathcal{W}(\mathcal{H},\lambda,u;x)$ is based on properties of two other polynomials. The first is the \emph{base polynomial} of $\mathcal{H}$, defined as,
\[
    \Omega(\mathcal{H},\lambda,x)=\prod_{e\in E}\left(1+\lambda_e\prod_{v\in e}x_v\right)
\]
and the other is the \emph{key polynomial} of a vertex $v$, defined as
\[
    K^{(v)}(z)=\sum_{i=0}^{d_\mathcal{H}(v)}\binom{d_\mathcal{H}(v)}{i} u_i^{(v)}z^i.
\]

Now we are ready to state Wagner's theorem.

\begin{theorem}[Theorem~3.2 of \cite{wagner2009}]\label{th:wagner}
Let $\mathcal{H}$, $u$, and $\lambda$ be defined as above. 
\begin{itemize}
\item[(i)] If $\Omega(\mathcal{H},\lambda;x)$ is $\mathcal{S}[\pi/2]$-nonvanishing and for each vertex $v$, $K^{(v)}(z)$ is $\mathcal{S}[\pi-\alpha]$-nonvanishing, then $Z_\mathcal{W}(\mathcal{H},\lambda,u;x)$ is $\mathcal{S}[\pi/2-\alpha]$-nonvanishing.
\item[(ii)] If $\Omega(\mathcal{H},\lambda;x)$ is $\kappa\mathcal{D}$-nonvanishing  and for each vertex $v$, $K^{(v)}(z)$ is $\rho\mathcal{D}$-nonvanishing, then $Z_\mathcal{W}(\mathcal{H},\lambda,u;x)$ is $\kappa\rho\mathcal{D}$-nonvanishing. 
\item[(iii)] If $\Omega(\mathcal{H},\lambda;x)$ is  $\kappa\mathcal{E}$-nonvanishing and for each vertex $v$, $K^{(v)}(z)$ is $\rho\mathcal{E}$-nonvanishing and of degree $d_{\mathcal{H}}(v)$, then $Z_\mathcal{W}(\mathcal{H},\lambda,u;x)$ is  $\kappa\rho\mathcal{E}$-nonvanishing. 
\end{itemize}
\end{theorem}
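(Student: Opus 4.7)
The plan is to write $Z_\mathcal{W}(\mathcal{H},\lambda,u;x)$ as the composition, over vertices $v$, of linear operators $\mathcal{L}_v$ applied to $\Omega(\mathcal{H},\lambda;x)$, and to identify each $\mathcal{L}_v$, viewed in the single variable $x_v$, as the Schur--Szeg\H{o} composition with $K^{(v)}$. The theorem then reduces to iterated use of the classical Schur--Szeg\H{o} composition theorem on circular regions.

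Concretely, I would define $\mathcal{L}_v$ on polynomials by $\mathcal{L}_v[x_v^k\, m(x_{-v})] = u_k^{(v)}\, x_v^k\, m(x_{-v})$. Since the degree of $\Omega$ in $x_v$ is at most $d_{\mathcal{H}}(v)$, writing
\[
\Omega(\mathcal{H},\lambda;x) = \sum_{k=0}^{d_{\mathcal{H}}(v)}\binom{d_{\mathcal{H}}(v)}{k}\tilde{c}_k(x_{-v})\, x_v^k
\]
and comparing with $K^{(v)}(z) = \sum_{k=0}^{d_{\mathcal{H}}(v)}\binom{d_{\mathcal{H}}(v)}{k} u_k^{(v)} z^k$ identifies $\mathcal{L}_v[\Omega]$, as a polynomial in $x_v$, with the Schur--Szeg\H{o} composition of $\Omega$ and $K^{(v)}$. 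A direct expansion of the definition of $Z_\mathcal{W}$ then shows $Z_\mathcal{W} = \mathcal{L}_n\circ\cdots\circ\mathcal{L}_1[\Omega]$.

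The heart of the argument is an induction on $k$: after applying $\mathcal{L}_1,\ldots,\mathcal{L}_k$, the resulting polynomial is nonvanishing on $(R')^k\times R^{n-k}$, where $R$ is the hypothesized region for $\Omega$ in the relevant part of the theorem ($\mathcal{S}[\pi/2]$, $\kappa\mathcal{D}$, or $\kappa\mathcal{E}$) and $R'$ is the corresponding target region for $Z_\mathcal{W}$ ($\mathcal{S}[\pi/2-\alpha]$, $\kappa\rho\mathcal{D}$, or $\kappa\rho\mathcal{E}$). For the inductive step, fix $x_j\in R'$ for $j\leq k$ and $x_j\in R$ for $j\geq k+2$; the induction hypothesis then guarantees that the current polynomial, viewed in $x_{k+1}$, has all its zeros in the circular region $\mathbb{C}\setminus R$. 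Schur--Szeg\H{o} next puts the zeros of $\mathcal{L}_{k+1}$ of this univariate polynomial into $-(\mathbb{C}\setminus R)\cdot(\mathbb{C}\setminus R_K)$, where $R_K$ is the region on which $K^{(v)}$ is nonvanishing, and a direct calculation shows that this product equals $\mathbb{C}\setminus R'$ in each of the three cases (for (i): the right half-plane times the closed sector of half-angle $\alpha$ around $\mathbb{R}_{\leq 0}$; for (ii): $\{|z|\geq\kappa\}\cdot\{|z|\geq\rho\}=\{|z|\geq\kappa\rho\}$; for (iii): $\{|z|\leq\kappa\}\cdot\{|z|\leq\rho\}=\{|z|\leq\kappa\rho\}$). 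This yields $R'$-nonvanishing in $x_{k+1}$ and completes the inductive step; after $n$ iterations one obtains that $Z_\mathcal{W}$ is $(R')^n$-nonvanishing, as desired.

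The main obstacle lies in part (iii): the Schur--Szeg\H{o} composition confines zeros to a bounded disk only when no zero escapes to infinity, which is precisely the condition that $K^{(v)}$ has degree exactly $d_{\mathcal{H}}(v)$. In parts (i) and (ii) the target regions are unbounded, so a possible drop in degree is harmless. A secondary subtlety is that the induction only ever uses the inductive hypothesis in the form ``$R$-nonvanishing in the newly processed variable,'' which is automatic regardless of whether $R'\subseteq R$; this matters in case (ii) when $\rho>1$ makes $\kappa\rho\mathcal{D}$ strictly larger than $\kappa\mathcal{D}$.
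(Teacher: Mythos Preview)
Your approach is essentially the same as the paper's: both express $Z_\mathcal{W}$ as the result of applying, one vertex at a time, the Schur--Szeg\H{o} composition with the key polynomial $K^{(v)}$ to the base polynomial $\Omega$, and then run an induction on the number of processed vertices using the Schur--Szeg\H{o} lemma (the paper's Lemma~\ref{lem:wagner}) to pass from the region $R$ to the region $R'$ in the processed coordinate. The paper defers part (i) entirely to Wagner's original paper, whereas you sketch it via the product-of-root-regions formulation; note that the sector $\mathbb{C}\setminus\mathcal{S}[\pi-\alpha]$ is not itself a circular region, so the clean way to justify that step is to rotate so that both $P$ and the rotated $K^{(v)}$ are $\mathcal{S}[\pi/2]$-nonvanishing and then invoke the half-plane Schur--Szeg\H{o} lemma, which indeed needs no degree hypothesis.
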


We note that this result is only stated for graphs in~\cite{wagner2009}, but the extension to hypergraphs that we present here is straightforward. For convenience of the reader we will provide a proof, closely following Wagner's proof for the graph case.
Before we start, we would further like to make two remarks.

\begin{remark}\label{rem:no angle for hypergraps}
From the definition of $\Omega(\mathcal{H},\lambda,x)$ it follows that if each hyperedge has size at least $2$, then $\Omega(\mathcal{H},\lambda,x)$ is $\mathcal{S}[\pi/2]$-nonvanishing if and only if each $\lambda_e\ge 0$ and the size of each hyperedge equals $2$. 
Therefore part (i) of the theorem only applies to ordinary graphs.
\end{remark}

\begin{remark}\label{rem:multivariate to univariate}
 Another useful observation is the case when $\mathcal{H}$ is a $k$-uniform hypergraph. Using the substitution $x_v=z^{1/k}$ for each vertex $v$ of $\mathcal{H}$, the polynomial
\[
    Z_{\mathcal{W}}(\mathcal{H},\lambda,u;x)=\sum_{F\subseteq E}\lambda^Fu_{\textrm{deg}(F)}z^{|F|}
\]
is a one variable polynomial, since in a $k$-uniform hypergraph $\sum_{v\in V}d_F(v)=k|F|$.
\end{remark}

To prove Theorem~\ref{th:wagner} we will need the following lemma.

\begin{lemma}[Schur-Szeg\H o, Proposition 2.4(b) and (c) of~\cite{wagner2009}] \label{lem:wagner}
Let $P(z)=\sum_jc_jz^j$ and $K(z)=\sum_{j=0}^d\binom{d}{j}u_jz^j$ be polynomials in one complex variable, with $\deg P\leq d$. The Schur-Szeg\H o composition of polynomials $P(z)$ and $K(z)$ is the polynomial $Q(z)=\sum_{j=0}^du_jc_jz^j$. For any $\kappa>0$ and $\rho>0$, if $P(z)$ is $\rho\mathcal{D}$-nonvanishing and $K(z)$ is $\kappa\mathcal{D}$-nonvanishing, then $Q(z)$ is $\kappa\rho\mathcal{D}$-nonvanishing. Similarly, if $P(z)$ is $\rho\mathcal{E}$-nonvanishing and $K(z)$ is $\kappa\mathcal{E}$-nonvanishing and $\deg(K)=d$, then $Q(z)$ is $\kappa\rho\mathcal{E}$-nonvanishing.
\end{lemma}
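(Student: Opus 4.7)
The plan is to derive both parts from Grace's apolarity theorem: two polynomials $A(z)=\sum_{j=0}^d \binom{d}{j} a_j z^j$ and $B(z)=\sum_{j=0}^d \binom{d}{j} b_j z^j$ are \emph{apolar} if $\sum_{j=0}^d (-1)^j \binom{d}{j} a_j b_{d-j}=0$, and Grace's theorem asserts that no closed circular region containing all zeros of $A$ can be disjoint from the zero set of $B$. I will use Grace as a black box and spend the work on the reduction.

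First I would pad $P(z)=\sum_{j=0}^d c_j z^j$ (allowing leading zero coefficients) so that $P$ and $K$ both have formal degree $d$. The core observation is that $Q(z_0)=0$ can be rewritten as an apolarity relation between $\tilde P(z):=z^d P(1/z)=\sum_j c_{d-j} z^j$ and $\tilde K(w):=K(-z_0 w)=\sum_j \binom{d}{j} u_j (-z_0)^j w^j$. Indeed a direct computation yields
\[
 \sum_{j=0}^d (-1)^j \binom{d}{j}\cdot \frac{c_{d-j}}{\binom{d}{j}}\cdot u_{d-j}(-z_0)^{d-j} = (-1)^d Q(z_0),
\]
so $Q(z_0)=0$ if and only if $\tilde P$ and $\tilde K$ are apolar in the sense above.

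For the disk case, suppose towards contradiction that $Q(z_0)=0$ with $|z_0|<\kappa\rho$. The case $z_0=0$ is handled directly since $Q(0)=u_0 c_0 = K(0)P(0)\neq 0$ by hypothesis. For $z_0\neq 0$, every zero of $\tilde P$ is the reciprocal of a zero of $P$ and hence lies in $\{|z|\leq 1/\rho\}$, while every zero of $\tilde K$ has the form $-\beta/z_0$ with $|\beta|\geq\kappa$ and $|z_0|<\kappa\rho$, which forces $|{-\beta/z_0}|>1/\rho$. The closed disk $\{|z|\leq 1/\rho\}$ is then a circular region containing every zero of $\tilde P$ but no zero of $\tilde K$, contradicting Grace's theorem. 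The low-degree degenerate cases (when $P$ has formal degree strictly less than $d$, or analogously for $K$) are dealt with by a small perturbation/continuity argument.

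The exterior case would be handled by an analogous argument, either by running the same reasoning with the roles of ``inside'' and ``outside'' interchanged, or else by reducing to the disk case via the reciprocation $z\mapsto 1/z$ applied simultaneously to $Q$, $P$, and $K$. The hypothesis $\deg K = d$ is needed precisely so that the reciprocal of $K$ remains of formal degree $d$ and does not acquire a spurious zero at $0$ that would ruin the reduction. The main obstacle I foresee is the bookkeeping required to convert $Q(z_0)=0$ into a clean, symmetric degree-$d$ apolarity relation and to justify the low-degree degenerate cases; once that setup is in place, the separation of zeros into two disjoint circular regions and the appeal to Grace's theorem are routine.
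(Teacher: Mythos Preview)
The paper does not give its own proof of this lemma; it simply quotes it as Proposition~2.4(b) and (c) of Wagner~\cite{wagner2009} and uses it as a black box in the proof of Theorem~\ref{th:wagner}. So there is nothing in the paper to compare your argument against.

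That said, your proposal is the classical route to the Schur--Szeg\H{o} composition theorem via Grace's apolarity theorem, and it is essentially correct. The identity you compute does reduce $Q(z_0)=0$ to an apolarity relation between $\tilde P(z)=z^dP(1/z)$ and $\tilde K(w)=K(-z_0 w)$, and the separation of their zeros into the closed disk $\{|z|\le 1/\rho\}$ and its open complement is exactly what is needed to invoke Grace. Two small points worth tightening: (i) the definition of ``$\mathcal{A}$-nonvanishing'' in the paper explicitly allows the polynomial to be identically zero, so you should dispose of the cases $P\equiv 0$ or $K\equiv 0$ at the outset (trivially $Q\equiv 0$ then); (ii) for the $\mathcal{E}$ case your reciprocation reduction is correct, but note that you also need $P(0)\neq 0$ (equivalently $c_0\neq 0$) for $\tilde P$ to have degree $d$ in the disk argument --- this holds because $0$ lies in $\rho\mathcal{D}$ and $P$ is assumed $\rho\mathcal{D}$-nonvanishing (and not identically zero). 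With those caveats handled, your sketch goes through.
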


\begin{proof}[Proof of Theorem~\ref{th:wagner}]
By Remark~\ref{rem:no angle for hypergraps} above, part (i) is covered by Theorem~3.2 of ~\cite{wagner2009}. 
We therefore focus on the case that $\Omega(\mathcal{H},\lambda;x)$ is $\kappa\mathcal{D}$-nonvanishing and each key polynomial $K^{(v)}(z)$ is $\rho\mathcal{D}$-nonvanishing. The proof for the case where $\mathcal{D}$ is replaced by $\mathcal{E}$ follows along exactly the same lines.

We identify the vertex set $V$ with $\{1,\ldots,n\}.$ We define a sequence of polynomials, $F_0(x),F_1(x),\ldots,F_n(x)$ as follows.
We set $F_0(x):=\Omega(\mathcal{H},\lambda,x)$. For all $1\leq i\leq n$ we let $F_i(x)$ to be obtained as the Schur-Szeg\H o composition of $F_{i-1}(x)$ and the $i$th key polynomial in the variable $x_i$, $K_i(x_i)$ (the remaining variables being absorbed in the coefficients).
By induction one has
\[
F_i(x)=\sum_{F\subseteq E} \lambda^F \left(\prod_{j=1}^i u^{(j)}_{d_F(j)}\right) x^{\deg(F)},
\]
implying that $F_n(x)=Z_\mathcal{W}((\mathcal{H},\lambda,u;x))$.

We next show by induction that if $\xi_1,\ldots,\xi_n$ are such that 
\[\xi_j \in \kappa\rho \mathcal{D} \text{ if } j<i \text{ and } \xi_j\in \rho\mathcal{D} \text{ if } j>i,
\]
then $F_{i-1}(\xi_1,\ldots,\xi_{j-1},x_i,\xi_{i+1},\ldots,\xi_n)$ is $\rho\mathcal{D}$-nonvanishing.
The base case $i=1$ follows from the assumption. 
By Lemma~\ref{lem:wagner}  we immediately obtain the induction step.
To see that $F_n(x)$ is $\kappa\rho\mathcal{D}$-nonvanishing, we apply Lemma~\ref{lem:wagner} once more to $F_{n-1}(\xi_1,\ldots,\xi_{n-1},x_n)$ and $K_n(x_n)$, for any choice of $\xi_1,\ldots,\xi_{n-1}\in \kappa\rho\mathcal{D}$, which is $\kappa\mathcal{D}$-nonvanishing by the above, to obtain the desired result.
\end{proof}

Our main goal will be to express the partition function of the Ising-model of a line graph and the edge cover polynomial as a subgraph counting polynomial. 
We start with the edge cover polynomial, as this is easiest one.
%

\section{Edge cover polynomial}\label{sec:edge cover}
We start by giving two proofs of Theorem~\ref{thm:edge cover}(i), by expressing the edge cover polynomial as a subgraph counting polynomial in two different ways.
After this we prove Theorem~\ref{thm:edge cover strong} and we conclude with proving that for graphs $-4$ cannot be a root of the edge cover polynomial thereby concluding the proof of Theorem~\ref{thm:edge cover}(ii).

\subsection{First proof of Theorem~\ref{thm:edge cover}(i)}
\begin{lemma}\label{lemma:ecwagner}
 For a hypergraph $\mathcal{H}$, the edge cover polynomial at $\xi$ can be expressed as
 \[
    \mathcal E(\mathcal{H},\xi)=Z_\mathcal{W}(\mathcal{H},\xi,(0,1,\dots,1); 1).
 \]
\end{lemma}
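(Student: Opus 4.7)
The lemma is essentially a direct unpacking of definitions, so my plan is simply to substitute the specified parameters into the definition of $Z_\mathcal{W}$ and observe that the only surviving terms are exactly the edge covers.

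Concretely, I would begin by writing out $Z_\mathcal{W}(\mathcal{H},\xi,(0,1,\ldots,1);1)$ with $\lambda_e=\xi$ for every edge, $u^{(v)}=(0,1,\ldots,1)$ for every vertex, and $x_v=1$ for every vertex. Since $x_v=1$, the factor $x^{\deg(F)}=\prod_v x_v^{d_F(v)}$ is identically $1$, and $\lambda^F=\xi^{|F|}$. Thus
\[
Z_\mathcal{W}(\mathcal{H},\xi,(0,1,\ldots,1);1)=\sum_{F\subseteq E}\xi^{|F|}\prod_{v\in V}u^{(v)}_{d_F(v)}.
\]

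The key observation is then the role of the chosen vertex weights. Because $u^{(v)}_0=0$ while $u^{(v)}_i=1$ for $i\ge 1$, the vertex product $\prod_{v\in V}u^{(v)}_{d_F(v)}$ equals $0$ whenever some vertex $v$ is uncovered by $F$ (i.e.\ $d_F(v)=0$), and equals $1$ otherwise. Hence the surviving terms of the sum are precisely those indexed by $F\subseteq E$ satisfying $d_F(v)\ge 1$ for every $v\in V$, which by definition are exactly the edge covers of $\mathcal{H}$.

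Collecting these contributions yields
\[
Z_\mathcal{W}(\mathcal{H},\xi,(0,1,\ldots,1);1)=\sum_{\substack{F\subseteq E\\ F\text{ edge cover}}}\xi^{|F|}=\mathcal{E}(\mathcal{H},\xi),
\]
which is the desired identity. There is no real obstacle here; the only point requiring care is checking that the hypergraph having no isolated vertices is not needed for the identity itself (it is only needed later to guarantee that the edge cover polynomial is nontrivial), and that the indexing of $u^{(v)}$ from $0$ to $d_\mathcal{H}(v)$ is compatible with the tuple $(0,1,\ldots,1)$ being interpreted of the appropriate length for each vertex.
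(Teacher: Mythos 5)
Your proof is correct and matches the paper's argument exactly: both simply unpack the definition of $Z_\mathcal{W}$ with the given weights and note that $\prod_v u^{(v)}_{d_F(v)}$ acts as the indicator $\prod_v \mathbf{1}_{d_F(v)>0}$ selecting precisely the edge covers. Your version just spells out the substitutions in more detail.
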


\begin{proof}
 We simply have to  check the definition of the subgraph counting polynomial, that is,
 \[
    Z_\mathcal{W}(\mathcal{H},\xi,(0,1,\dots,1); 1)=\sum_{F\subseteq E}\xi^{|F|}\prod_{v\in V(\mathcal{H})}\mathbf{1}_{d_F(v)>0}=\mathcal E(\mathcal{H},\xi).
 \]
\end{proof}

To apply Wagner's theorem, we have to investigate the location of zeros of the key polynomials. Let $L_d(z)=(1+z)^d$ and $K_d(z)=(1+z)^d-1$.

\begin{lemma}\label{lemma:eckey}
 For any $d\ge 0$, the polynomials 
 \[L_d(z)=(1+z)^d \ \ \ \ \mbox{and} \ \ \ K_d(z)=\sum_{i=1}^d\binom{d}{i}z^i=(1+z)^d-1\]
 are $2\mathcal{E}$-nonvanishing.
\end{lemma}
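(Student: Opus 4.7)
The plan is to compute the roots of both polynomials explicitly; this lemma is elementary once one spots the factorization, so there is no real obstacle.

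For $L_d(z) = (1+z)^d$, the only root is $z = -1$, which satisfies $|z| = 1 \leq 2$. Hence $L_d$ has no roots outside the closed disk of radius $2$, i.e.\ it is $2\mathcal{E}$-nonvanishing.

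For $K_d(z) = (1+z)^d - 1$, I would rewrite the equation $K_d(z) = 0$ as $(1+z)^d = 1$, so that $1+z$ must be a $d$-th root of unity. Writing $z = \omega - 1$ with $\omega^d = 1$, the triangle inequality immediately gives
\[
|z| = |\omega - 1| \leq |\omega| + 1 = 2.
\]
Thus every root of $K_d$ lies in the closed disk of radius $2$, which is exactly the statement that $K_d$ is $2\mathcal{E}$-nonvanishing. The case $d = 0$ is vacuous since $K_0 \equiv 0$ and $L_0 \equiv 1$ have no finite roots.
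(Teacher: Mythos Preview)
Your proof is correct and follows essentially the same approach as the paper: both identify the roots of $K_d$ as $\omega - 1$ for $d$-th roots of unity $\omega$, with the paper invoking a ``simple geometric argument'' about the regular $d$-gon and you using the triangle inequality, which is just a clean way to phrase that argument. One small wording issue: for $d=0$ the polynomial $K_0 \equiv 0$ does not ``have no finite roots'' (it vanishes identically), but your conclusion is still correct because the paper's definition of $\mathcal{A}$-nonvanishing explicitly includes the zero polynomial.
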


\begin{proof}
 The statement is trivial for $L_{d}(z)$. On the other hand, the roots of $K_d(z)$ are translations of the $d$-th root of unity by $1$. 
 Since the $d$-th roots of unity form vertices of a regular $d$-gon, therefore by simple geometric argument, we obtain the desired statement.
\end{proof}

Let us fix a value $\xi\in\mathbb{C}$, such that $\kappa=|\xi|>2^k>1$. 

\begin{lemma}
 The base polynomial, 
 \[
  \prod_{e\in E}\left(1+\xi\prod_{v\in e}x_v\right),
 \]
 is $\kappa^{-1/k}\mathcal{E}$-nonvanishing.
\end{lemma}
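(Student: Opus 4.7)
The plan is to argue directly from the factored form of the base polynomial. A product of complex numbers vanishes if and only if at least one factor does, so it suffices to show that for every edge $e\in E$ and every choice of $x_v\in\kappa^{-1/k}\mathcal{E}$ (that is, $|x_v|>\kappa^{-1/k}$), the scalar $1+\xi\prod_{v\in e}x_v$ is nonzero. The cleanest way to see this is to force the modulus $|\xi\prod_{v\in e}x_v|$ strictly above $1$, since then it certainly cannot equal $-1$.

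First I would record the straightforward estimate: if $|x_v|>\kappa^{-1/k}$ for each $v\in e$, then
\[
\Bigl|\prod_{v\in e}x_v\Bigr|>\kappa^{-|e|/k}.
\]
Since $k$ is defined as the size of the largest edge, we have $|e|\leq k$, and because $\kappa=|\xi|>2^k>1$ the function $t\mapsto \kappa^{-t/k}$ is decreasing. Hence $\kappa^{-|e|/k}\geq \kappa^{-1}$, giving $\prod_{v\in e}|x_v|>\kappa^{-1}$. Multiplying by $|\xi|=\kappa$ yields
\[
\Bigl|\xi\prod_{v\in e}x_v\Bigr|>\kappa\cdot\kappa^{-1}=1,
\]
so in particular $1+\xi\prod_{v\in e}x_v\neq 0$. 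Taking the product over $e$ then shows the base polynomial is $\kappa^{-1/k}\mathcal{E}$-nonvanishing.

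There is no real obstacle in this argument; it is a one-line modulus computation. The only thing worth being careful about is handling edges of size strictly less than $k$ (which a priori makes $\kappa^{-|e|/k}$ larger, not smaller, than $\kappa^{-1}$), but this only strengthens the bound since $\kappa>1$. The single hypothesis $\kappa>2^k$ is used merely to guarantee $\kappa>1$; the sharper inequality $\kappa>2^k$ will only be used later in combination with Lemma~\ref{lemma:eckey}, via part (iii) of Theorem~\ref{th:wagner}, to conclude that $\mathcal{E}(\mathcal{H},\xi)\neq 0$.
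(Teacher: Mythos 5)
Your argument is correct and is essentially identical to the paper's proof: both are the same one-line modulus estimate showing $\bigl|\xi\prod_{v\in e}x_v\bigr|>\kappa^{1-|e|/k}\ge 1$ for each edge, using $|e|\le k$ and $\kappa>1$. Nothing further is needed.
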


\begin{proof}
 This is clear since  if each $x_v$ has absolute value at least $\kappa^{-1/k}$, then
 \[
    \left|\xi\prod_{v\in e}x_v\right|>\kappa \prod_{v\in e} \kappa^{-1/k}=\kappa^{1-|e|/k}\ge 1.
 \]
\end{proof}

Now we are ready to prove Theorem~\ref{thm:edge cover}(i). 
We would like to show that $\mathcal{E}(\mathcal{H},\xi)\neq 0$.
Consider the polynomial $Z_\mathcal{W}(\mathcal{H},\xi,(0,1,\dots,1),z)$. 
We will use Theorem~\ref{th:wagner} to show that the subgraph counting polynomial is not zero at $z=1$. 
Indeed, as the key polynomials are $2\mathcal{E}$-nonvanishing, we get from Theorem~\ref{th:wagner} that $Z_\mathcal{W}(\mathcal{H},\xi,(0,1,\dots,1),z)$ is $2(\kappa^{-1/k})\mathcal{E}$-nonvanishing, that is $(1-\varepsilon)\mathcal{E}$-nonvanishing for some $\varepsilon>0$. 
 As we are interested in the value of this polynomial at $z=1$, and since $1\in (1-\varepsilon)\mathcal{E}$, we therefore obtain,
 \[
    0\neq Z_\mathcal{W}(\mathcal{H},\xi,(0,1,\dots,1),1)=\mathcal{E}(\mathcal{H},\xi),
 \]
 as desired.

\subsection{Second proof of Theorem~\ref{thm:edge cover}(i)} 
It will be convenient for us to define for a hypergraph $\mathcal{H}=(V,E)$ of largest edge size $k$, its uniformization, $\widehat{\mathcal{H}}$, by adding new extra vertices to edges, in a way that we obtain a $k$-uniform hypergraph with same number of edges. Let the set of new vertices be denoted by $S$ and the set of edges of $\widehat{\mathcal{H}}$ by $\widehat{E}$.

\begin{lemma}\label{lemma:ecwagner2}
 For a hypergraph $\mathcal{H}$ the edge cover polynomial can be expressed as
 \[
    \mathcal E(\mathcal{H},z)=Z_\mathcal{W}(\widehat{\mathcal{H}},1,u;z^{1/k}),
 \]
 where $u^{(v)}=(0,1,1,\dots,1)$ if $v\notin S$ and $u^{(v)}=(1,\dots,1)$ otherwise.
\end{lemma}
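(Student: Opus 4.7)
The plan is to verify the stated identity by a direct unpacking of the definition of $Z_{\mathcal W}$. By construction, the uniformization $\widehat{\mathcal H}$ enlarges each hyperedge of $\mathcal H$ with fresh vertices drawn from $S$ while keeping the edge set in natural bijection with $E$; in particular every $F\subseteq \widehat E$ corresponds canonically to a subset of $E$ with the same cardinality, and the remainder of the argument will be carried out along this bijection.

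First, because $\widehat{\mathcal H}$ is $k$-uniform, the identity $\sum_{v}d_F(v)=k|F|$ (cf.\ Remark~\ref{rem:multivariate to univariate}) gives $x^{\deg(F)}=z^{|F|}$ under the substitution $x_v=z^{1/k}$, and the choice $\lambda_e=1$ trivializes the factor $\lambda^F$. Thus the polynomial collapses to
\[
    Z_{\mathcal W}(\widehat{\mathcal H},1,u;z^{1/k}) = \sum_{F\subseteq \widehat E} u_{\deg(F)}\, z^{|F|}.
\]

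Next, I would read off the weight $u_{\deg(F)}=\prod_v u^{(v)}_{d_F(v)}$ vertex by vertex. Every dummy vertex $v\in S$ contributes the factor $1$ since all entries of $u^{(v)}$ are $1$; every original vertex $v\notin S$ contributes $u^{(v)}_{d_F(v)}=\mathbf{1}[d_F(v)>0]$ because only the zeroth entry of $u^{(v)}$ is zero. Hence $u_{\deg(F)}$ equals $1$ precisely when each original vertex is covered by $F$, and equals $0$ otherwise. Translating via the bijection between $\widehat E$ and $E$, the surviving terms are those for which the corresponding subset is an edge cover of $\mathcal H$, which yields $\sum_{F\text{ edge cover}}z^{|F|}=\mathcal E(\mathcal H,z)$ and completes the argument.

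There is no real obstacle here: the proof is a definition chase, and the only design choice worth flagging is the split of roles between $S$ and $V\setminus S$. The value $u^{(v)}_0=0$ on original vertices is what enforces the cover condition, while the all-ones sequence on the dummy vertices makes them transparent to the weighting, ensuring that uniformizing $\mathcal H$ does not alter the polynomial. This split is what will allow us in the sequel to combine the uniform key-polynomial zero-free bound from Lemma~\ref{lemma:eckey} with a base polynomial estimate on $\widehat{\mathcal H}$.
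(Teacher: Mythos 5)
Your proof is correct and follows essentially the same route as the paper's (much terser) argument: both reduce the claim to the observation that the all-ones weights on the dummy vertices of $S$ make them invisible, the zero at index $0$ on the original vertices enforces the cover condition, and the $k$-uniformity of $\widehat{\mathcal H}$ turns $x^{\deg(F)}$ into $z^{|F|}$ via Remark~\ref{rem:multivariate to univariate}. Your version simply spells out the definition chase that the paper leaves implicit.
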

\begin{proof}
 Observe that a subset of edges $E$ in $\mathcal{H}$ is an edge cover if and only if the corresponding edges in $\widehat{\mathcal{H}}$ covers $V(\mathcal{H})\setminus S$ as well. Thus the edge covering polynomial of $\mathcal{H}$ and the ``relaxed edge cover polynomial'' of $\widehat{\mathcal{H}}$ are the same.
The lemma is now an immediate corollary of the definition of the edge cover polynomial and Remark~\ref{rem:multivariate to univariate} in the previous section. 
\end{proof}

In order to apply Wagner's theorem, we have to investigate the location of zeros of the key polynomials  $L_{d}(z)=(1+z)^d$ and $K_d(z)=\sum_{i=1}^d\binom{d}{i}z^i=(1+z)^d-1$.
We already proved the relevant properties of these polynomials in Lemma~\ref{lemma:eckey}. 
Using this, we obtain by Theorem~\ref{th:wagner}, that
 \[
    Z_{\mathcal{W}}(\widehat{\mathcal{H}},1,u;z)=\sum_{F\subseteq \widehat{E} \textrm{ covers $V(\mathcal{H})\setminus S$}} z^{\sum_{v\in\widehat V}d_F(v)}=\sum_{F\subseteq \widehat{E} \textrm{ covers $V(\mathcal{H})\setminus S$}} z^{k|F|}
 \]
 is $2\mathcal{E}$-nonvanishing. By substituting $z\mapsto z^{1/k}$, we obtain a polynomial that is $2^{k}\mathcal{E}$-nonvanishing, as desired.

\subsection{The Cardioid-like region}
In this section, we will strengthen Theorem~\ref{thm:edge cover}. 
We will not use Theorem~\ref{th:wagner}, but a similar technique to find a zero-free region for the subgraph-counting function. 

The following lemma will play the role of Asano-contraction in the main proof. This lemma is  a slight modification of \cite[Lemma~7]{guo2020zeros}.

\begin{lemma}\label{lemma:asano}
 Let $p(z)=\sum_{k=0}^da_kz^k$ such that $a_d\neq 0$.  Assume that $p(z)\neq 0$ if $z\notin K$ for some $K\subseteq \mathbb{C}$ closed set. Then
 \[
    q(z)=a_dz+a_0
 \]
 has its (unique) root contained in $(-1)^{d+1}K^d$.
\end{lemma}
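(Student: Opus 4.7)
The plan is to read off the root of $q$ directly from Vieta's formulas applied to $p$, exploiting the hypothesis that all roots of $p$ live in $K$.

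First, since $a_d \neq 0$, the polynomial $p$ has degree exactly $d$, and so, counted with multiplicity, it has exactly $d$ complex roots $r_1,\dots,r_d$. By hypothesis each $r_i$ lies in $K$. Factoring,
\[
p(z)=a_d\prod_{i=1}^{d}(z-r_i),
\]
and comparing constant terms yields $a_0 = a_d(-1)^d\prod_{i=1}^{d} r_i$, i.e.\ $a_0/a_d = (-1)^d \prod_i r_i$.

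Second, the polynomial $q(z)=a_d z+a_0$ has the single root $z_0=-a_0/a_d = (-1)^{d+1}\prod_{i=1}^d r_i$. Since each factor $r_i$ belongs to $K$, the product $\prod_i r_i$ belongs to $K^d := \{w_1\cdots w_d : w_i \in K\}$ by definition, and therefore $z_0 \in (-1)^{d+1} K^d$, which is exactly the claim.

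I do not anticipate a real obstacle here; the statement is essentially a repackaging of Vieta's formulas together with the observation that roots of $p$ are confined to $K$. The only point requiring mild care is the bookkeeping of the sign $(-1)^{d+1}$, which comes from combining the $(-1)^d$ in Vieta's formula with the minus sign in $z_0=-a_0/a_d$, and checking that $K^d$ is interpreted as the set of unrestricted (with-repetition) products of $d$ elements of $K$, so that the multiset $\{r_1,\dots,r_d\}$ with possible repetitions is admissible.
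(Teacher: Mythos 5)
Your proof is correct and is essentially identical to the paper's: both factor $p$ via its roots (which the hypothesis confines to $K$), apply Vieta's formula to the constant term, and read off the root of $q$ as $(-1)^{d+1}$ times a product of $d$ elements of $K$. The only difference is cosmetic (the paper writes the factorization as $a_d\prod(z+\xi_i)$ with $\xi_i\in -K$ rather than $a_d\prod(z-r_i)$ with $r_i\in K$).
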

\begin{proof}
 Let $p(z)=a_d(z+\xi_1)\dots(z+\xi_d)$, where $\xi_i\in -K$.
 If we denote the root of $q(z)$ by $z_0$, then
 \begin{align*}
    z_0=-\frac{a_0}{a_d}=-\frac{a_d\xi_1\dots\xi_d}{a_d}\in(-1)^{d+1}K^d.
 \end{align*}
\end{proof}
We will call the previous polynomial transformation the \emph{Asano-contraction of $p(z)$ over the variable $z$ of degree $d$}.

The idea of the proof is that we use Asano-contraction iteratively for a rightly chosen multivariate polynomial. The issue that could occur is that the resulting polynomial does not have the correct degree so that we cannot apply Asano-contraction again. To rule out this case, we have to relax the definition of the edge cover polynomial by saying there are vertices where we can use any number of edges similarly to the subgraph counting polynomial of the previous subsection.

\begin{definition} Let $\mathcal{H}$ be a hypergraph with $E(\mathcal{H})=\{e_1,\dots,e_m\}$ edges, and let $S$ be a subset of $V(\mathcal{H})$. Then we define
 \begin{itemize}
  \item the \emph{base-polynomial}
  \[
    \Omega_{\mathcal H,S}(z_{e_1},\dots,z_{e_m})=\prod_{v\in S}\left(\prod_{e:v\in e}(1+z_e)\right)\prod_{v\notin S}\left(\prod_{e:v\in e}(1+z_e)-1\right);
  \]
  \item for $0\le k\le m$ the \emph{intermediate polynomials} as
  \[
    P_{\mathcal H,S,0}(\mathbf{z})=\Omega_{\mathcal{H},S}(\mathbf{z}),
  \]
  and $P_{\mathcal{H},S,k}(\mathbf{z})$ as the Asano-contraction of $P_{\mathcal{H},S,k-1}(\mathbf{z})$ over the variable $z_{e_k}$ of degree $|e_k|$;
  \item the \emph{edge-cover polynomial of $\mathcal{H}$ relaxed over $S$} as
  \[
    \mathcal E(\mathcal{H},S,z)=P_{\mathcal{H},S,m}(z,\dots,z);
  \]
  \item $B=\{\alpha\in\mathbb{C}~|~|\alpha+1|> 1\}$, and $T_k=\mathbb{C}\setminus \{(-1)^{k+1}\prod_{i=1}^k\alpha_i~|~\alpha_1,\dots,\alpha_k\notin B\}$, in particular, $T_1=B$;
  \item Notation: 
  $T_{\mathcal{H},k}=T_{|e_1|}\times\dots\times T_{|e_k|}\subseteq \mathbb{C}^{k}$.
 \end{itemize}
\end{definition}

\begin{remark}
 It is important to understand the meaning of $P_{\mathcal{H},S,m}(z_{e_1},\dots,z_{e_m})$. Note that it is a multilinear polynomial since after an application of the Asano contraction to the variable $z_e$ the resulting polynomial will be linear in this variable. So this polynomial can be written as 
 $$\sum_{F\subseteq E(\mathcal{H})}a_F\prod_{e\in F}z_e.$$
 Now let us understand the coefficient $a_F$. This means that whenever $e\in F$ we chose the degree $|e|$ term from the previous multivariate polynomial, and whenever $e\notin F$ we chose the constant term. Now observe that if $v\notin S$, then there is no constant term in $\prod_{v\in e}(1+z_e)-1$. This means that we should choose  at least one edge covering $v$ into $F$, otherwise $a_F=0$. For $v\in S$ there is no such requirement since there is a constant term in $\prod_{v\in e}(1+z_e)$. So $a_F=1$ if the elements of $F$ cover every vertex not in $S$. They may cover some vertices from $S$, but they do not need to. So the meaning of 
 \[
    \mathcal E(\mathcal{H},S,z)=P_{\mathcal{H},S,m}(z,\dots,z)
  \]
  is that it counts the edge sets $F$ with weight $z^{|F|}$ if it covers all vertices not in $S$. Thus $\mathcal{E}(\mathcal{H},\emptyset,z)$ is the classical edge cover polynomial of the hypergraph $\mathcal{H}$.
\end{remark}

\begin{theorem}
Let $\mathcal{H}=(V,E)$ be a hypergraph. For any set $S\subseteq V$ containing all isolated vertices of $\mathcal H$, and $k\ge 0$, the polynomial 
 \[
    P_{\mathcal{H},S,k}(\mathbf{z})
 \]
 is non-zero and $T_{\mathcal{H},k}\times B^{m-k}$-nonvanishing.
\end{theorem}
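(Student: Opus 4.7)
The plan is to prove the statement by induction on $k$, with Lemma~\ref{lemma:asano} doing the work for the inductive step. The base case $k=0$ is $P_{\mathcal{H},S,0}=\Omega_{\mathcal{H},S}$, a product of one factor per vertex. For $z_e\in B=\{\alpha:|\alpha+1|>1\}$ we have $|1+z_e|>1$, so every factor $\prod_{e\ni v}(1+z_e)$ with $v\in S$ has modulus larger than $1$, and every factor $\prod_{e\ni v}(1+z_e)-1$ with $v\notin S$ is nonzero because its product part has modulus strictly bigger than $1$. The assumption that $S$ contains all isolated vertices is exactly what ensures that this second type of factor is a genuine nonzero polynomial (and not the identically-zero expression $1-1$), so $\Omega_{\mathcal{H},S}$ itself is a nonzero polynomial as well.

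For the inductive step, I would fix values $(\xi_1,\dots,\xi_{k-1})\in T_{\mathcal{H},k-1}$ and $(\xi_{k+1},\dots,\xi_m)\in B^{m-k}$, and study the single-variable polynomial
\[
p(z):=P_{\mathcal{H},S,k-1}(\xi_1,\dots,\xi_{k-1},z,\xi_{k+1},\dots,\xi_m).
\]
By the inductive hypothesis $p(z)\neq 0$ for every $z\in B$; in particular $p$ is not identically zero and every root of $p$ lies in the closed set $K:=\mathbb{C}\setminus B=\{z:|z+1|\le 1\}$. Setting $d=|e_k|$ and invoking Lemma~\ref{lemma:asano}, the unique root of the Asano contraction $q(z)=a_dz+a_0$---which by construction equals $P_{\mathcal{H},S,k}$ at the same $\xi_i$ with the $k$-th slot left free---lies in $(-1)^{d+1}K^d=\mathbb{C}\setminus T_{|e_k|}$. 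Hence $q(z)\neq 0$ for every $z\in T_{|e_k|}$, which is precisely the non-vanishing on $T_{\mathcal{H},k}\times B^{m-k}$ that we want; non-vanishing on a non-empty open set also forces $P_{\mathcal{H},S,k}\not\equiv 0$.

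The main obstacle I anticipate is a small bookkeeping issue: Lemma~\ref{lemma:asano} formally requires that the leading coefficient $a_d$ of $p$ be nonzero, whereas specializing the other variables could in principle drop the degree of $p$ in $z$ below $d=|e_k|$. The plan to resolve this is to identify $a_d$ with a smaller $\Omega$-type product, obtained from $\Omega_{\mathcal{H},S}$ by replacing, for every $v\in e_k$, the vertex factor by $\prod_{e\ni v,\,e\neq e_k}(1+z_e)$ (the trailing $-1$ never contributes to the top coefficient in $z_{e_k}$ when $v\in e_k\setminus S$), and then to run the same base-case estimate on this reduced product to guarantee $a_d\neq 0$ at the chosen $(\xi_i)$. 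A brief density/perturbation argument would also suffice, since the zero set of $a_d$ has empty interior in $T_{\mathcal{H},k-1}\times B^{m-k}$ and non-vanishing of $q$ on $T_{|e_k|}$ is closed under small perturbations of the $\xi_i$.
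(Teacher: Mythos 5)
Your overall strategy is exactly the paper's: induct on $k$, specialize all variables except $z_{e_k}$, and apply Lemma~\ref{lemma:asano} to the resulting univariate polynomial $p(z)$. The base case and the shape of the inductive step are fine. The one place where the argument is not yet complete is the point you yourself flag --- showing that $p(z)$ genuinely has degree $|e_k|$ with nonzero leading coefficient --- and neither of your two proposed fixes closes it as stated.

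Your first fix treats $a_{|e_k|}$ as ``a smaller $\Omega$-type product'' to which ``the same base-case estimate'' applies. That is only literally true when $k=1$. For $k\ge 2$, the coefficient of $z_{e_k}^{|e_k|}$ in $P_{\mathcal{H},S,k-1}$ is obtained from the reduced product $\Omega_{\mathcal{H}-e_k,\,S\cup e_k}$ (your identification of the coefficient in $\Omega_{\mathcal{H},S}$ is correct) by performing the first $k-1$ Asano contractions, since coefficient extraction in $z_{e_k}$ commutes with contractions in the other variables; that is, $a_{|e_k|}$ equals $P_{\mathcal{H}-e_k,\,S\cup e_k,\,k-1}$ evaluated at $(\xi_1,\dots,\xi_{k-1},\xi_{k+1},\dots,\xi_m)$. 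The points $\xi_1,\dots,\xi_{k-1}$ lie in $T_{|e_1|}\times\cdots\times T_{|e_{k-1}|}$, not in $B$, so no base-case estimate applies; what is needed is the full induction hypothesis for the hypergraph $\mathcal{H}-e_k$ with relaxation set $S\cup e_k$, which is admissible precisely because every newly isolated vertex of $\mathcal{H}-e_k$ lies in $e_k\subseteq S\cup e_k$. This is how the paper argues, and it is the entire reason the relaxation set $S$ appears in the statement. Your fallback perturbation argument also leaves a loose end: at a point $\xi$ where $a_{|e_k|}(\xi)=0$, the limit of the contractions $q_{\xi'}$ is the constant $a_0(\xi)=p_\xi(0)$, and since $0\notin B$ the induction hypothesis does not exclude $p_\xi(0)=0$; a Hurwitz-type limit argument then only yields ``nonvanishing or identically zero,'' which is not enough. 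Once you replace both fixes by the induction hypothesis applied to $(\mathcal{H}-e_k,\,S\cup e_k)$, your proof coincides with the paper's.
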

\begin{proof}
We will prove the statement by induction on $k$.
If $k=0$, then the statement is trivial for any $S\subseteq V(G)$, since for any vertex the polynomials
\[
    \prod_{v\in e}(1+z_e) \textrm{ and } \prod_{v\in e}(1+z_e)-1
\]
are $B^m$-stable.

Let us assume that $k\ge 1$. Then, by induction, we have $P_{\mathcal{H},S,k-1}(\mathbf{z})$ is $T_{\mathcal{H},k-1}\times B^{m-k+1}$-nonvanishing non-zero polynomial. Fix $(\xi_1,\dots,\xi_{k-1})\in T_{\mathcal{H},k-1}$, and $\tau_{k+1},\dots,\tau_{m}\in B$. Then,
\[
    p(z)=P_{\mathcal{H},S,k-1}(\xi_1,\dots,\xi_{k-1},z,\tau_{k+1},\dots,\tau_{m})
\]
is a non-zero $B$-stable polynomial or the zero-polynomial. 

We claim that $p(z)$ is a  polynomial of degree $|e_k|$. This is true, since the coefficient of $z^{|e_k|}$ in $p(z)$ is exactly
\[
    P_{\mathcal{H}-e_k,S\cup e_k,k-1}(\xi_1,\dots,\xi_{k-1},\tau_{k+1},\dots,\tau_m),
\]
which is not the zero polynomial by induction. (Since new additional isolated vertices of $\mathcal{H}-e_k$ are in $e_k$.)

If we denote 
\[
    q(z)=P_{\mathcal{H},S,k}(\xi_1,\dots,\xi_{k-1},z,\tau_{k+1},\dots,\tau_{m}),
\]
then $q(z)$ is exactly the Asano-contraction of $p(z)$ over $z$ of degree $|e_k|$. By Lemma~\ref{lemma:asano} we have that $q(z)\neq 0$ if $z\in T_{|e_k|}$.

Thus, we proved that for any choice of $(\xi_1,\dots,\xi_{k-1},\xi_k)\in T_{\mathcal{H},k}$ and $\tau_{k+1},\dots,\tau_{m}\in B$, the polynomial
\[
    P_{\mathcal{H},S,k}(\xi_1,\dots,x_k,\tau_{k+1},\dots,\tau_{m})\neq 0,
\]
as desired.
\end{proof}

Theorem~\ref{thm:edge cover strong} follows directly from this result, taking $S=\emptyset$ and realizing that the set $T_2$ is equal to the complement of the set $\{-(1-\alpha)^2\mid |\alpha|\leq 1\}$, see the next lemma.

\begin{lemma}\label{lem:cardioid}
 The set $T_2\subseteq \mathbb{C}$ is exactly
$
    \mathbb{C}\setminus \{-(1-\alpha)^2 ~|~ |\alpha|\le 1\}
$.
\end{lemma}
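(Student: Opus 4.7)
The plan is to reparametrize both sides so that the claim collapses to a clean statement about squaring on the closed disk of radius $1$ centered at $1$. Let $D=\{w\in\mathbb{C}\mid |w-1|\le 1\}$. Writing $\alpha_i=-u_i$ turns the condition $\alpha_i\notin B$ (i.e.\ $|\alpha_i+1|\le 1$) into $u_i\in D$, and $-\alpha_1\alpha_2=-u_1u_2$. On the other side, setting $v=1-\alpha$ turns $|\alpha|\le 1$ into $v\in D$, and $-(1-\alpha)^2=-v^2$. Hence the lemma is equivalent to the set equality
\[
\{u_1u_2\mid u_1,u_2\in D\}=\{v^2\mid v\in D\}.
\]

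The inclusion $\supseteq$ is immediate by taking $u_1=u_2=v$. For the reverse inclusion, I would prove the following key geometric fact: for every pair $u_1,u_2\in D$, at least one of the two complex square roots of $u_1u_2$ lies in $D$. Given this, setting $v$ to be such a square root produces the required preimage.

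To prove the key fact, I would use the polar description $D=\{re^{i\theta}\mid |\theta|\le \pi/2,\ 0\le r\le 2\cos\theta\}$, which is just the inequality $|w-1|^2\le 1$ rewritten as $|w|^2\le 2\operatorname{Re}(w)$. Write $u_j=r_je^{i\theta_j}$ and consider the canonical square root $\delta=\sqrt{r_1r_2}\,e^{i(\theta_1+\theta_2)/2}$, whose argument lies in $[-\pi/2,\pi/2]$. Checking $\delta\in D$ reduces to $\sqrt{r_1r_2}\le 2\cos\tfrac{\theta_1+\theta_2}{2}$. Using $r_j\le 2\cos\theta_j$ it suffices to establish
\[
\cos\theta_1\cos\theta_2\le \cos^2\!\tfrac{\theta_1+\theta_2}{2},
\]
which after the product-to-sum and half-angle identities collapses to $\cos(\theta_1-\theta_2)\le 1$, a triviality.

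The main obstacle is essentially this trigonometric inequality; once the right reparametrization is in place it is elementary, but choosing the correct square root (namely the one with argument in $[-\pi/2,\pi/2]$) is the conceptual step that makes the argument go through. Everything else is bookkeeping: the $\supseteq$ direction is free, and the reduction from the definition of $T_2$ to the statement about $D$ is an unambiguous substitution.
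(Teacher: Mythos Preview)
Your proof is correct. Both your argument and the paper's reduce the lemma to the identity $\{u_1u_2 \mid u_1,u_2\in D\}=\{v^2\mid v\in D\}$ for a radius-$1$ disk $D$ tangent to the origin (the paper uses the disk $R$ centered at $-1$, you negate to $D$ centered at $1$), and both identify the candidate square root of $u_1u_2$ as the point on the bisector of the two arguments. The executions differ: the paper first invokes star convexity of the sets from $0$ to reduce to the case where $z_1,z_2$ lie on the boundary circle, and then proves the inequality $|z_1||z_2|\le |z'|^2$ for the boundary bisector point $z'$ via a classical Euclidean similar-triangles argument using the inscribed angle theorem. Your argument bypasses both of these steps: using the polar description $r\le 2\cos\theta$ you check directly that the square root $\sqrt{r_1r_2}\,e^{i(\theta_1+\theta_2)/2}$ satisfies the same inequality, reducing everything to $\cos\theta_1\cos\theta_2\le\cos^2\tfrac{\theta_1+\theta_2}{2}$, i.e.\ $\cos(\theta_1-\theta_2)\le 1$. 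Your route is shorter and avoids the auxiliary geometry; the paper's route makes the underlying Euclidean picture explicit. The core inequality is the same in both.
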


In the forthcoming proof $\measuredangle BAC$ means the angle at $A$ determined by the lines $BA$ and $AC$.

 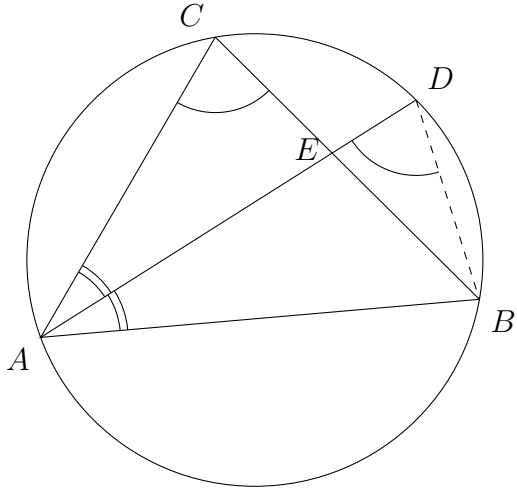
\begin{figure}[ht]
     \centering
    \begin{tikzpicture}[scale=0.6]
        \draw (0,0) circle[radius=5];
        \node[below left] at (200:5) {$A$};
        \coordinate (A) at (200:5);
        \node[below right] at (350:5) {$B$};
        \coordinate (B) at (350:5);
        \node[above left] at (100:5) {$C$};
        \coordinate (C) at (100:5);
        \node[above right] at (45:5) {$D$};
        \coordinate (D) at (45:5);
        
        \draw (C)--(A) pic[draw=black, angle eccentricity=1.2, angle radius=1cm]
            {angle=D--A--C};
        \draw pic[draw=black,angle eccentricity=1.2, angle radius=1.1cm]
            {angle=D--A--C};
        \draw (D) -- (A) -- (B);
        \draw pic[draw=black,angle eccentricity=1.2, angle radius=1.05cm]
            {angle=B--A--D};
        \draw pic[draw=black,angle eccentricity=1.2, angle radius=1.15cm]
            {angle=B--A--D};
        \draw (C)--(B);
        \draw[dashed] (D) --(B);
        
        \draw pic[draw=black,angle eccentricity=1.2, angle radius=1cm]
            {angle=A--C--B};
        \draw pic[draw=black,angle eccentricity=1.2, angle radius=1cm]
            {angle=A--D--B};
            
        \node (E) at (65:2.7) {$E$};
    \end{tikzpicture}

     \caption{Proof of the geometric fact from Lemma~\ref{lem:cardioid}.}
     \label{fig:fact}
 \end{figure}

\begin{proof}
 The statement is equivalent with the the following statement. Let $R=\{z ~|~ |(-1)-z|\leq 1\}$, then the sets $R_1=\{z^2 ~|~ z\in R\}$ and  $R_2=\{z_1z_2 ~|~ z_1,z_2\in R\}$ are the same. Clearly, $R_1\subseteq R_2$ so we only need to prove the opposite containment. Note that both $R_1$ and $R_2$ are star convex from the point $0$, so it is enough to prove that if $w\in R_2$, then there exists a $w'$ such that $\mathrm{arg} (w')=\mathrm{arg} (w)$ and $|w'|\geq |w|$.
 So we assume that $z_1,z_2\in R$ are on the boundary of $R$, and $w=z_1z_2$. We show that if we choose $z'$ to be the intersection of the boundary of $R$ with the angle bisector of the angle determined by $z_1,0,z_2$, then $w'=z'^2$ satisfies the above conditions.
 
 To see this we need the following geometric fact. Let $ABC$ be a triangle and let $D$ be the intersection of the circumscribed circle of $ABC$ and the angle bisector of $\angle BAC$. 
 Let $E$ be the intersection of the angle bisector of $\angle BAC$ and the side $BC$. 
 Then $|AB|\cdot |AC|=|AD|\cdot |AE|$. 
 This is because the triangles $ABD$ and $AEC$ are similar: $\measuredangle BAD=\measuredangle EAC$ since $AD$ is an angle bisector, and $\measuredangle ADB=\measuredangle ACE$ by the inscribed angle theorem (see Figure~\ref{fig:fact} for a picture describing this).
 Hence
 $|AB|\cdot |AC|=|AD|\cdot |AE|\leq |AD|^2$. 
 
 Applying this to $A=0$, $B=z_1$, $C=z_2$, $D=z'$ we get the claim that for $w=z_1z_2$ and  $w'=z'^2$ we have $\mathrm{arg}(w')=\mathrm{arg}(w)$ and $|w'|\geq |w|$.

\end{proof}

 

\subsection{The evaluation at $z_0=-4$}

The main goal of this subsection is to finish the proof of Theorem~\ref{thm:edge cover}(ii) and thereby confirming Conjecture~5.1 of \cite{csikvari2011roots}. 
By Theorem~\ref{thm:edge cover strong} a root of the edge cover polynomial can have absolute value $4$ only if it is equal to $-4$. As we will show, this case cannot occur.

We need the following lemma.

\begin{lemma}
 Let $G=(V,E)$ be a multigraph on $m$ edges and let $S\subset V$ be a set of vertices containing all isolated vertices of $G$. 
 If $\mathcal E(G,S,-4)=0$ and $e=(u,v)\in E$, then either $\mathcal E(G-e,S\cup\{u,v\},-4)=0$ or $e$ is not a loop and $\mathcal E(G/e,S,-4)=0$.
\end{lemma}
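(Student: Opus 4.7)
The plan is to reduce the conclusion to a root-localization argument for a one-variable polynomial obtained as the Asano-contracted form of $\Omega_{G,S}$ at all edges except $e$. First, the standard edge-deletion recurrence
\[
 \mathcal{E}(G, S, z) = \mathcal{E}(G-e, S, z) + z\,\mathcal{E}(G-e, S\cup\{u,v\}, z),
\]
obtained by splitting $F\subseteq E$ according to whether $e\in F$, yields at $z=-4$ the relation $\mathcal{E}(G-e, S, -4) = 4\,\mathcal{E}(G-e, S\cup\{u,v\}, -4)$ under the hypothesis $\mathcal{E}(G,S,-4)=0$. If $b := \mathcal{E}(G-e, S\cup\{u,v\}, -4) = 0$, the first alternative of the lemma holds, so from now on assume $b \neq 0$ and set $a := \mathcal{E}(G-e, S, -4) = 4b$.

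Next I would introduce the intermediate polynomial $\tilde{Q}(z_e, \mathbf{z}')$ obtained from $\Omega_{G,S}(\mathbf{z})$ by performing the Asano contraction at every edge except $e$, so that $P_{G,S,m}$ is the further Asano contraction of $\tilde{Q}$ in $z_e$. In the main case $u, v \notin S$ with $e$ non-loop, expanding the two endpoint factors of $\Omega_{G,S}$ as a polynomial in $z_e$ and then carrying out the remaining Asano contractions gives
\[
 \tilde{R}(z_e) := \tilde{Q}(z_e, -4, \ldots, -4) = b\,z_e^2 + (a+c)\,z_e + a,
\]
where $c = \mathcal{E}(G-e, S\cup\{u\}, -4) + \mathcal{E}(G-e, S\cup\{v\}, -4) - a$, and a short inclusion-exclusion at the merged vertex of $G/e$ identifies $c$ with $\mathcal{E}(G/e, S, -4)$. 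The final Asano at $z_e$ of degree $2$ collapses $\tilde R$ to $b(z_e+4)$, whose vanishing at $z_e=-4$ is exactly our hypothesis.

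The decisive step is to locate the roots of $\tilde{R}$. Iterating Lemma \ref{lemma:asano} in the same spirit as the proof of Theorem \ref{thm:edge cover strong} shows that $\tilde{Q}(z_e, \mathbf{z}')$ is $B \times T_2^{m-1}$-nonvanishing, i.e.\ all its roots in $z_e$ lie in the closed disk $B^c = \{w : |w+1|\le 1\}$ whenever $\mathbf{z}' \in T_2^{m-1}$. Since $-4 = -(1-(-1))^2$ lies on the boundary of $T_2$ and $b \neq 0$ prevents the degree of $\tilde R$ from dropping, Hurwitz's theorem applied to any sequence $\mathbf{z}'_n \in T_2^{m-1}$ with $\mathbf{z}'_n \to (-4,\ldots,-4)$ forces both roots $r_1, r_2$ of $\tilde R$ into $B^c$. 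In $B^c$ one has $|r| \le 2$ with equality only at $r = -2$, so Vieta's relation $r_1 r_2 = a/b = 4$ combined with $|r_1 r_2| \le |r_1|\,|r_2| \le 4$ forces $r_1 = r_2 = -2$, whence $r_1 + r_2 = -(a+c)/b = -4$ yields $c = 0$, i.e.\ $\mathcal{E}(G/e, S, -4) = 0$, which is the second alternative.

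Finally I would dispose of the remaining configurations as variants of the same scheme: the loop case at $u \notin S$ and the case with exactly one of $u, v$ in $S$ are handled by the analogous computation of $\tilde R$, which in each case produces $-4$ as a root outside $B^c$ (respectively $\tilde R(z_e) = a + b z_e$ and $\tilde R(z_e) = b(z_e+1)(z_e+4)$), forcing $b = 0$ by the same Hurwitz argument and hence yielding the first alternative; the remaining case $u, v \in S$ is immediate from the factorization $\mathcal{E}(G, S, z) = (1+z)\,\mathcal{E}(G-e, S, z)$. The main technical subtlety I anticipate is the Hurwitz passage from the open stability region to the boundary point $(-4,\ldots,-4)$, where the nondegeneracy $b \neq 0$ of the leading coefficient of $\tilde R$ is precisely what keeps its degree and root count stable in the limit.
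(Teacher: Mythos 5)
Your proof is correct and follows essentially the same route as the paper's: contract all edges except $e$, evaluate the remaining variables at $-4$, confine the roots of the resulting quadratic in $z_e$ to the closed disk $\{w : |w+1|\le 1\}$, and use the Vieta relation $r_1r_2=a/b=4$ to force a double root at $-2$, which translates into $\mathcal E(G/e,S,-4)=0$ via the same inclusion--exclusion identity at the contracted vertex (with the loop and one-endpoint-in-$S$ cases disposed of identically). The only notable difference is that you make explicit, via a Hurwitz/continuity argument, the passage from the open region $T_2^{m-1}$ to the boundary point $(-4,\dots,-4)\notin T_2^{m-1}$ --- a step the paper's proof leaves implicit when it asserts that $P_{G,S,m-1}(-4,\dots,-4,z)$ is $B$-nonvanishing.
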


\begin{proof}
 Suppose for contradiction that $\mathcal E(G-e,S\cup\{u,v\},-4)\neq 0$ and if $e$ is not a loop, then $\mathcal E(G/e,S,-4)\neq 0$. Assume that the edges are ordered in a way such that $e_m=e$. 
 To proceed, we have to discuss a few cases depending the size of $e$, and on the number of endpoints of $e=(u,v)$ being in $S$.
 
 If $u=v$, that is, $e$ is a loop, then
 the polynomial 
 \[
    P_{G,S,m}(-4,\dots,-4,z)
 \]
 is a linear $T_1=B$-nonvanishing polynomial that is zero at $-4$. This could only happen, if this is the zero polynomial, in which case the `main coefficient' $\mathcal{E}(G-e,S\cup\{u\},-4)=0$, a contradiction.
 
 For the remainder we may assume that $u\neq v$.
 If $u,v\in S$, then we have
 \[
    0=\mathcal E(G,S,-4)=(-4+1)\cdot \mathcal E(G-e,S,-4).
 \]
 Thus $\mathcal E(G-e,S\cup\{u,v\},-4)=0$, a contradiction. 
 
 So we may assume that not both $u$ and $v$ are contained in $S$. 
  The polynomial 
 \[
    q(z)=P_{G,S,m}(-4,\dots,-4,z),
 \]
 is a non-zero linear polynomial with a zero at $-4$. 
 Indeed, since the main coefficient is $\mathcal{E}(G-e,S\cup\{u,v\},-4)\neq 0$. 
 Next, consider the polynomial
 \[
    p(z)=P_{G,S,m-1}(-4,\dots,-4,z).
 \]
This is a polynomial of degree $2$, which is $B$-nonvanishing. 
Since $q(z)$ is the Asano-contraction of $p(z)$, the two zeros $\xi_1,\xi_2$ of $p(z)$ satisfy $\xi_1\xi_2=4$. On the other hand, we know that $p(z)$ is a $B$-nonvanishing polynomial, therefore, $\xi_1=\xi_2=-2$.
In particular, the constant term and the linear term of $p(z)$ are equal. 
We obtain that the constant term is equal to $\mathcal E(G-e,S,-4)$ and the linear term is equal to $\mathcal E(G-e,S\cup\{u\},-4)+\mathcal E(G-e,S\cup\{v\},-4)$.  
Therefore we obtain in case $u,v\notin S$,  that $\mathcal E(G-e,S,-4)$ equals
\begin{align*}
&\mathcal E(G-e,S\cup\{u\},-4)+  \mathcal E(G-e,S\cup\{v\},-4)
    \\
 =&\left( \mathcal E(G-e,S,-4)+ \mathcal E(G-u,S,-4)\right)+\left( \mathcal E(G-e,S,-4)+ \mathcal E(G-v,S,-4)\right).
 \end{align*}
 From which it follows that
 \[0= \mathcal E(G-e,S,-4)+ \mathcal E(G-u,S,-4)+  \mathcal E(G-v,S,-4)= \mathcal E(G/e,S,-4),
 \]
 as desired.
Otherwise, we may assume by symmetry, that $u\in S$ and $v\notin S$, in which case we obtain that $\mathcal E(G-e,S,-4)$ equals
 \begin{align*}
&\mathcal E(G-e,S\cup\{u\},-4)+\mathcal E(G-e,S\cup\{v\},-4)
\\
=&\mathcal E(G-e,S,-4)+\mathcal E(G-e,S\cup\{u,v\},-4),
 \end{align*}
 implying that $\mathcal E(G-e,S\cup\{u,v\},-4)=0$, a contradiction.
 This finishes the proof.
\end{proof}

As an immediate corollary we obtain that $-4$ cannot be a root of the edge cover polynomial.
\begin{corollary}
 Let $G=(V,E)$ be a multigraph and let $S\subseteq V$ be a set containing all isolated vertices of $G$. Then
 \[
  \mathcal  E(G,S,-4)\neq 0.
 \]
\end{corollary}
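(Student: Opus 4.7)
The plan is to prove the corollary by induction on the number of edges $m=|E(G)|$, using the preceding lemma as the key reduction tool. For the base case $m=0$, all vertices are isolated, so $V=S$ by hypothesis, and hence $\mathcal{E}(G,S,z)=1$, which is certainly nonzero at $-4$.

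For the inductive step, suppose $m\geq 1$ and, toward a contradiction, that $\mathcal{E}(G,S,-4)=0$. Pick any edge $e=(u,v)$. The preceding lemma asserts that at least one of the following holds: either (A) $\mathcal{E}(G-e,S\cup\{u,v\},-4)=0$, or (B) $e$ is not a loop and $\mathcal{E}(G/e,S,-4)=0$. In case (A), the graph $G-e$ has $m-1$ edges and the set $S\cup\{u,v\}$ contains every isolated vertex of $G-e$ (any new isolated vertex must lie in $\{u,v\}$), so the induction hypothesis gives a contradiction.

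Case (B) is subtler, because the conclusion $\mathcal{E}(G/e,S,-4)=0$ may hold vacuously when the merged vertex $w$ becomes isolated in $G/e$ while lying outside $S$. If $w$ is not isolated in $G/e$, then the isolated vertices of $G/e$ coincide with those of $G$, are all in $S$, and induction applied to $G/e$ contradicts (B). Otherwise both $u$ and $v$ must have degree one in $G$, so $e$ is their sole incident edge. An inspection of the proof of the preceding lemma shows that alternative (B) arises only when $u,v\notin S$; therefore in this exceptional sub-case the only way to cover $u$ and $v$ in any relaxed cover of $G$ is via $e$, which yields the factorization
\[
\mathcal{E}(G,S,-4)=(-4)\cdot \mathcal{E}(G-u-v,S,-4).
\]
Since $G-u-v$ has $m-1$ edges and $S$ still contains all its isolated vertices (removing the two degree-one vertices $u,v$ deletes only the edge $e$), the induction hypothesis gives $\mathcal{E}(G-u-v,S,-4)\neq 0$, contradicting $\mathcal{E}(G,S,-4)=0$. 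The main obstacle is precisely this last sub-case, where contraction creates an isolated vertex outside $S$ and the lemma's conclusion becomes trivial; the factorization above circumvents it.
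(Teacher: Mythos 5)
Your proof is correct and follows essentially the same route as the paper: induction (minimal counterexample) on the number of edges, reducing via the deletion/contraction lemma. In fact you are slightly more careful than the paper's own one-line argument, since you explicitly handle the corner case where contracting $e$ creates an isolated merged vertex outside $S$ (so that the induction hypothesis would not directly apply to $G/e$), resolving it with the factorization $\mathcal{E}(G,S,z)=z\cdot\mathcal{E}(G-u-v,S,z)$.
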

\begin{proof}
 For the sake of contradiction, assume that there exists such an example. 
 Let $G$ be a counterexample with the minimum number of edges. If $G$ has no edge, then $S=V$, thus, $\mathcal E(G,S,z)=1$. Therefore, we may assume that $G$ has an edge.
  Let $e\in E(G)$. Then, by the previous lemma, either $E(G-e,S\cup\{u,v\},-4)=0$ or $E(G/e,S,-4)=0$. But in each case, the number of edges is strictly less than the number of edges of $G$ that leads us to a contradiction.
\end{proof}

\section{The antiferromagnetic Ising-model on line graphs}\label{sec:Ising}
We begin by expressing the partition function of the Ising model on a line graph in terms of the underlying graph. 
\begin{lemma}\label{cl:express}
For a line graph $L=L(G)$, the partition function of the Ising model has the following form
\[
  Z_{L(G)}(z^2)=b^{|E(L)|}\sum_{F\subseteq E}\prod_{v\in V}b^{-\binom{d_G(v)-d_F(v)}{2}-\binom{d_F(v)}{ 2}}z^{d_F(v)}.
\]
\end{lemma}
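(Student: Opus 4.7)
The plan is to translate the sum over $U\subseteq V(L(G))$ into a sum over $F\subseteq E(G)$ by identifying $V(L(G))$ with $E(G)$, and then exploit the clique structure that line graphs inherit from $G$. Starting from the definition, $Z_{L(G)}(z,b)=\sum_{F\subseteq E(G)} z^{|F|}b^{|\delta_{L(G)}(F)|}$. The key geometric point is that for every vertex $v\in V(G)$, the edges of $G$ incident to $v$ induce a clique on $d_G(v)$ vertices in $L(G)$, and because $G$ is simple, every edge of $L(G)$ is produced by exactly one such clique. Writing $d_F(v)$ for the number of edges of $F$ incident to $v$, the contribution of this clique to $|\delta_{L(G)}(F)|$ is the cut size $d_F(v)\bigl(d_G(v)-d_F(v)\bigr)$ of a bipartition of $K_{d_G(v)}$, so $|\delta_{L(G)}(F)|=\sum_{v\in V(G)} d_F(v)(d_G(v)-d_F(v))$.

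Next, I would perform the substitution $z\mapsto z^2$ and use the handshake-type identity $\sum_v d_F(v)=2|F|$ to rewrite $z^{2|F|}=\prod_v z^{d_F(v)}$, producing the per-vertex $z$-factor in the desired form.

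For the exponent of $b$, I would apply the elementary identity
\[
\binom{\alpha+\beta}{2}-\binom{\alpha}{2}-\binom{\beta}{2}=\alpha\beta
\]
with $\alpha=d_F(v)$ and $\beta=d_G(v)-d_F(v)$ at each vertex $v$. This converts $d_F(v)(d_G(v)-d_F(v))$ into $\binom{d_G(v)}{2}-\binom{d_G(v)-d_F(v)}{2}-\binom{d_F(v)}{2}$. Summing over $v$ and pulling out the $F$-independent term gives
\[
b^{|\delta_{L(G)}(F)|}=b^{\sum_v \binom{d_G(v)}{2}}\prod_v b^{-\binom{d_G(v)-d_F(v)}{2}-\binom{d_F(v)}{2}},
\]
and since $|E(L(G))|=\sum_v \binom{d_G(v)}{2}$, the prefactor is exactly $b^{|E(L)|}$. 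Combining this with the $z$-factor above yields the claimed formula.

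There is no real obstacle here; the only subtle point is the clique decomposition of $L(G)$ and the fact that every edge of $L(G)$ is attributed to a unique vertex of $G$ (which requires $G$ simple). Once this is in place, everything reduces to the binomial identity above.
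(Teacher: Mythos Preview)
Your proof is correct and follows essentially the same approach as the paper: the clique decomposition of $L(G)$ into the edge-disjoint cliques $Q_v$ indexed by vertices of $G$, the identity $f(d-f)=\binom{d}{2}-\binom{f}{2}-\binom{d-f}{2}$, the handshake identity $\sum_v d_F(v)=2|F|$, and the count $|E(L(G))|=\sum_v \binom{d_G(v)}{2}$. There is no substantive difference between your argument and the paper's.
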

\begin{proof}
 Since the vertex set of $L(G)$  coincides with the edges of $G$, therefore
 \[
  Z_{L(G)}(z^2,  b ) =
    \sum_{U \subseteq V(L(G))} z^{2\lvert U \rvert} \cdot b ^{\lvert \delta_{L(G)}(U) \rvert}=
    \sum_{F \subseteq E(G)}z^{2\lvert F \rvert} \cdot b ^{\lvert \delta_{L(G)}(F) \rvert}.
 \]
 
 To express $\delta_{L(G)}(F)$, observe that $L(G)$ can be covered by edge disjoint cliques $Q_v$, where each clique corresponds those edges of $G$ containing the vertex $v$. Therefore, to count the number of edges in $\delta_{L(G)}(F)$ we can take the disjoint union
 \[
    \delta_{L(G)}(F)=\bigcup_{v\in V(G)} E(Q_v)\cap \delta_{L(G)}(F).
 \]
 Let us assume that $Q_v$ is a clique of size $d$ and $|F\cap Q_v |= f$, i.e. $\deg_{G}(v)=d$ and $\deg_{F}(v)=f$. Then clearly,
 \[
    |E(Q_v)\cap \delta_{L(G)}(F)|=f(d-f)=\binom{d}{2}-\binom{f}{ 2}-\binom{d-f}{2}.
 \]
 Thus,
 \[
    |\delta_{L(G)}(F)|=\sum_{v\in V(G)}\left[ \binom{d_G(v)}{2}-\binom{d_F(v)}{2}-\binom{d_{G}(v)-d_{F}(v)}{2}\right],
 \]
 and
 \begin{align*}
  Z_{L(G)}(z^2)&=
  \sum_{F\subseteq E(G)}b^{\sum_{v\in V(G)}\binom{d_G(v)}{2}-\binom{d_F(v)}{2}-\binom{d_{G}(v)-d_{F}(v)}{2}}z^{\sum_{v}\deg_F(v)}\\
  &=b^{|E(L(G))|}\sum_{F\subseteq E(G)} \prod_{v\in V}b^{-\binom{d_G(v)-d_F(v)}{2}-\binom{d_F(v)}{2}}z^{d_F(v)}.
 \end{align*}

%
%
%
%

\end{proof}


We now observe that 
\[
  Z_{L(G)}(z)=b^{|E(L(G))| }Z_\mathcal{W}(G,1,(b^{-\binom{d_G(v)-i}{2}-\binom{i}{2}})_{i=0}^{d_G(v)};z^{1/2}).
\]
The relevant base polynomial is given by for a graph $G=(V,E)$,
\[
\Omega(G,1,x)=\prod_{e\in E} \left(1+\prod_{v\in e}x_v\right)
\]
and is clearly $\mathcal{S}[\pi/2]$-nonvanishing (and $\mathcal{D}$-nonvanishing).
Therefore, by Theorem~\ref{th:wagner}, to prove Theorem~\ref{th:main}, it is enough to show that the key polynomials have only real roots, as we will show in the next lemma.

For the remainder of this section, let us for fixed $b>1$, denote the relevant key polynomials by
\begin{equation}\label{eq:key ising}
  K_{d}(z)=\sum_{i=0}^d{\binom{d}{i}}b^{-\binom{d-i}{2}-\binom{i}{2}}z^i.
\end{equation}


\begin{lemma} \label{recursion}
 For any $d\ge 0$ we have
 \[
    K_{d+1}(z)=b^{-{d}}K_{d}(b z)+zK_{d}(b^{-1} z).
 \]
\end{lemma}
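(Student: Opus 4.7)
The plan is to verify the identity by comparing the coefficient of $z^i$ on both sides; the whole proof reduces to two elementary binomial identities and Pascal's rule.

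First I would write out both sides explicitly. The coefficient of $z^i$ on the left is
\[
[z^i]\,K_{d+1}(z) = \binom{d+1}{i}\,b^{-\binom{d+1-i}{2} - \binom{i}{2}}.
\]
For the right-hand side, the term $b^{-d}K_d(bz)$ contributes (for $0\le i\le d$)
\[
\binom{d}{i}\,b^{-d + i - \binom{d-i}{2} - \binom{i}{2}},
\]
and the term $z\,K_d(b^{-1}z)$ contributes (for $1\le i\le d+1$)
\[
\binom{d}{i-1}\,b^{-(i-1) - \binom{d-i+1}{2} - \binom{i-1}{2}}.
\]

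Next I would simplify both $b$-exponents using the telescoping identity $\binom{n}{2} - \binom{n-1}{2} = n-1$. Applied to the first contribution with $n=d+1-i$ one gets
\[
-d + i - \binom{d-i}{2} - \binom{i}{2} = -(d-i) - \binom{d-i}{2} - \binom{i}{2} = -\binom{d+1-i}{2} - \binom{i}{2},
\]
and applied to the second contribution with $n=i$ one gets
\[
-(i-1) - \binom{d-i+1}{2} - \binom{i-1}{2} = -\binom{d+1-i}{2} - \binom{i}{2}.
\]
Thus both terms on the right carry the common prefactor $b^{-\binom{d+1-i}{2} - \binom{i}{2}}$, matching the exponent on the left.

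Finally I would conclude by Pascal's rule $\binom{d}{i} + \binom{d}{i-1} = \binom{d+1}{i}$ (with the usual convention that $\binom{d}{-1}=\binom{d}{d+1}=0$, which handles the boundary cases $i=0$ and $i=d+1$). This matches the binomial coefficient on the left and completes the proof. There is no real obstacle here; the only thing to watch is the bookkeeping of the quadratic exponents, which is controlled entirely by the identity $\binom{n}{2} - \binom{n-1}{2} = n-1$.
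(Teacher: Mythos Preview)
Your proof is correct and takes essentially the same approach as the paper: both arguments compare coefficients using Pascal's rule together with the identity $\binom{n}{2}-\binom{n-1}{2}=n-1$ to match the $b$-exponents. The only difference is cosmetic ordering---the paper starts from $K_{d+1}(z)$ and splits via Pascal's rule, whereas you compute both sides separately and match---so nothing further needs to be said.
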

\begin{proof}
 By definition,
 \begin{align*}
  K_{d+1}(z)&=\sum_{i=0}^{d+1}\binom{d+1}{i}b^{-\binom{d+1-i}{2}-\binom{i}{2}}z^i\\
  &=\sum_{i=0}^{d+1}\left(\binom{d}{ i}+\binom{d}{i-1}\right)b^{-\binom{d+1-i}{2}-\binom{i}{2}}z^i\\
  &=\sum_{i=0}^{d}\binom{d}{i}b^{-\binom{d+1-i}{2}-\binom{i}{2}}z^i+\sum_{i=0}^{d}\binom{d}{i}b^{-\binom{d-i}{2}-\binom{i+1}{2}}z^{i+1}\\
  &=\sum_{i=0}^{d}\binom{d}{i}b^{-\binom{d-i}{2}-\binom{i}{2}}b^{-\binom{d-i}{1}}z^i + z\sum_{i=0}^{d}\binom{d}{i}b^{-\binom{d-i}{2}-\binom{i}{2}}b^{-\binom{i}{1}}z^{i}\\
  &=b^{-d}K_{d}(bz)+zK_{d}(b^{-1}z).
 \end{align*}
\end{proof}

\begin{lemma}\label{lemma:realroot}
 For any real number $b>1$ and integer $d\ge 1$ the key polynomial
\[
  K_d(z)=\sum_{i}{\binom{d}{i}b^{-\binom{d-i}{2}-\binom{i}{2}}}z^i
\]
has simple negative real roots, i.e., it is $\mathcal{S}[\pi]$-nonvanishing.
Moreover,  the largest root $z^{(d)}_0<0$ of $K_d(z)$ has absolute value at most $|z^{(d)}_0|<b^{-d}$.
\end{lemma}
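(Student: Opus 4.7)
The plan is to prove the lemma by induction on $d$ using the recurrence from Lemma~\ref{recursion},
\[
K_{d+1}(z) = b^{-d} K_d(bz) + z K_d(b^{-1}z).
\]
Real-rootedness alone does not propagate cleanly through this recurrence, so I would strengthen the inductive hypothesis and prove simultaneously that the roots $\alpha_1^{(d)} < \dots < \alpha_d^{(d)} < 0$ of $K_d$ additionally satisfy the quantitative separation $|\alpha_i^{(d)}| > b^2 |\alpha_{i+1}^{(d)}|$ for every $1 \le i \le d-1$. From this, the bound on the largest root will drop out along the induction. The base case $d = 1$, with $K_1(z) = 1+z$, is immediate and carries no separation condition.

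For the inductive step, the two summands of the recurrence have root-sets $\{\alpha_i^{(d)}/b\}$ and $\{b\alpha_i^{(d)}\}$. The separation hypothesis is exactly the input needed to interleave these sets on the negative real axis in the pattern
\[
b\alpha_1^{(d)} < \alpha_1^{(d)}/b < b\alpha_2^{(d)} < \alpha_2^{(d)}/b < \dots < b\alpha_d^{(d)} < \alpha_d^{(d)}/b < 0.
\]
I would then evaluate $K_{d+1}$ at these $2d$ points together with $-\infty$ and $0$. At $z = b\alpha_j^{(d)}$ the second summand vanishes and $K_{d+1}(b\alpha_j^{(d)}) = b^{-d} K_d(b^2\alpha_j^{(d)})$; the separation hypothesis places the point $b^2\alpha_j^{(d)}$ strictly between $\alpha_{j-1}^{(d)}$ and $\alpha_j^{(d)}$ for $j \ge 2$ (and to the left of $\alpha_1^{(d)}$ when $j = 1$), so the sign is fixed by parity. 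Symmetrically, at $z = \alpha_i^{(d)}/b$ the first summand vanishes and $K_{d+1}(\alpha_i^{(d)}/b) = (\alpha_i^{(d)}/b) K_d(\alpha_i^{(d)}/b^2)$, with $\alpha_i^{(d)}/b^2$ localised strictly between $\alpha_i^{(d)}$ and $\alpha_{i+1}^{(d)}$. A straightforward bookkeeping check then shows that the resulting sign sequence has exactly $d+1$ sign changes on $(-\infty, 0)$, producing $d+1$ distinct negative real roots for $K_{d+1}$; since $\deg K_{d+1} = d+1$, these are all of them.

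The same sign analysis localises each new root to an interval: $\beta_1^{(d+1)} \in (-\infty, b\alpha_1^{(d)})$, $\beta_{d+1}^{(d+1)} \in (\alpha_d^{(d)}/b, 0)$, and $\beta_{k+1}^{(d+1)} \in (\alpha_k^{(d)}/b, b\alpha_{k+1}^{(d)})$ for intermediate $k$. From these bracketings one immediately reads off $|\beta_k^{(d+1)}| > b|\alpha_k^{(d)}|$ and $|\beta_{k+1}^{(d+1)}| < |\alpha_k^{(d)}|/b$, and hence $|\beta_k^{(d+1)}|/|\beta_{k+1}^{(d+1)}| > b^2$, closing the induction on the separation. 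The bound on the largest root is propagated the same way: from $|\beta_{d+1}^{(d+1)}| < |\alpha_d^{(d)}|/b$ together with the inductive bound on $|\alpha_d^{(d)}|$, the required power of $b^{-1}$ is gained at each step. The main delicacy is identifying the correct strengthened hypothesis: the ratio condition $|\alpha_i|/|\alpha_{i+1}| > b^2$ is simultaneously the geometric input needed to make the two scaled root-sets interleave, and the exact output regenerated by the recurrence, so it is the unique auxiliary assumption that lets the induction close; once it is in hand, the rest is a careful but routine sign-change count.
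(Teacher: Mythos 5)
Your proposal is correct and follows essentially the same route as the paper's own proof: the same strengthened induction hypothesis (consecutive roots separated by a ratio greater than $b^2$), the same use of the recurrence $K_{d+1}(z)=b^{-d}K_d(bz)+zK_d(b^{-1}z)$, the same interleaving of the two scaled root sets, and the same sign-change bookkeeping and root localization. One shared caveat: as in the paper, the induction actually yields $|z_0^{(d)}|\le b^{-(d-1)}$ (note $K_1(z)=1+z$ has its root at $-1$), so the exponent in the final bound of the lemma is off by one in both arguments, though this does not affect the applications.
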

\begin{proof}
 To prove the statement, we will show the following stronger statement: For any $d\ge 1$ the polynomial $K_d(z)$ has distinct zeros $0>z_1^{(d)}>\dots>z_{d}^{(d)}$ and for any $1\le i\le d-1$
 \[
    \frac{z^{(d)}_{i+1}}{z^{(d)}_{i}}>b^2.
 \]
 
 We will prove this statement by induction on $d$ using the identity of Lemma~\ref{recursion}, namely, 
 $K_{d+1}(z)=b^{-{d}}K_{d}(b z)+zK_{d}(b^{-1} z)$. If $d=1$, then the above statement on the zeros trivially holds. Assume that the statement is true for some $d\ge 1$. Let $P_1(z)=K_d(bz)$ and $P_2(z)=zK_d(b^{-1}z)$ and let $a_1>a_2>\dots>a_d$ be the zeros of $P_1(z)$ and let $0=b_1>b_2>\dots>b_{d+1}$ be the zeros of $P_2(z)$. 
 We refer to Figure~\ref{fig:zeros_key} for a schematic figure displaying the ideas of the proof.

    \begin{figure}[ht]
        \centering
        \begin{tikzpicture}[scale=1]
        \draw[->] (-7,0) -> (2,0);
        \filldraw(-6,0) circle (2pt) node[below] {$b_4$};
        \filldraw(-5.1,0) circle (2pt) node[above] {$z_3^{(3)}$};
        \filldraw(-4.35,0) circle (2pt) node[below] {$a_3$};
        \filldraw(-3.6,0) circle (2pt) node[below] {$b_3$};
        \filldraw(-3.0,0) circle (2pt) node[above] {$z_2^{(3)}$};
        \filldraw(-2.5,0) circle (2pt) node[below] {$a_2$};
        \filldraw(-1.6,0) circle (2pt) node[below] {$b_2$};
        \filldraw(-1.2,0) circle (2pt) node[above] {$z_1^{(3)}$};
        \filldraw(-0.8,0) circle (2pt) node[below] {$a_1$};
        \filldraw(0,0) circle (2pt) node[below] {$b_1$};
        \node[above] at (0,0) {$0$};
        
        \draw[decorate,decoration={brace,amplitude=4pt}] (-6,0.7) -- (-4.35,0.7) node [black,midway,yshift=+12pt] {$\cdot b^2$};
        \draw[decorate,decoration={brace,amplitude=4pt}] (-3.6,0.7) -- (-2.5,0.7) node [black,midway,yshift=+12pt] {$\cdot b^2$};
        \draw[decorate,decoration={brace,amplitude=4pt}] (-1.6,0.7) -- (-0.8,0.7) node [black,midway,yshift=+12pt] {$\cdot b^2$};
        
        \node at (-8,-1) {$\textrm{sign}(K_4(z))$};
        \node at (-6,-1) {$+$};
        \node at (-4.35,-1) {$-$};
        \node at (-3.6,-1) {$+$};
        \node at (-2.5,-1) {$+$};
        \node at (-1.6,-1) {$-$};
        \node at (-0.8,-1) {$-$};
        \node at (0,-1) {$+$};

        \end{tikzpicture}
    
        \caption{A schematic figure of the location of the zeros of $K_3(z)$,  $P_1(z)$ and $P_2(z)$.}
        \label{fig:zeros_key}
    \end{figure}
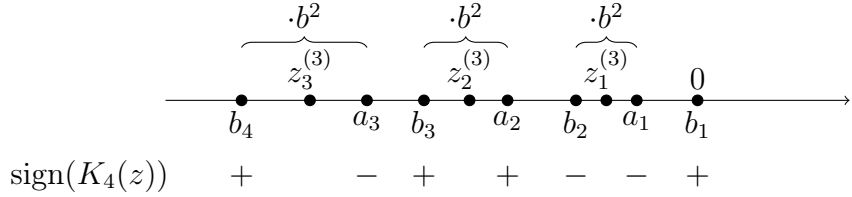

 We claim that $a_{i}>b_{i+1}$ and $b_{i}>a_{i}$ for any $0\le i\le d$. This is true indeed, since
 \[
    \frac{a_i}{b_{i+1}}=\frac{b^{-1}z_{i}^{(d)}}{bz_{i}^{(d)}}=b^{-2}<1
 \]
 and for $i\ge 1$
 \[
    \frac{b_i}{a_{i}}=\frac{bz_{i-1}^{(d)}}{b^{-1}z_{i}^{(d)}}<\frac{b^2}{b^2}=1.
 \]
 Therefore, the sequences $a_i$ and $b_i$ interlace, i.e. $b_{d+1}<a_d<b_d<\ldots<b_2<a_1<b_1=0$.
 Since the sign of $b^{-d}P_1(z)+P_2(z)=K_{d+1}(z)$ at the endpoints of the interval $[a_i,b_i]$ are different, therefore we know for sure that there is a zero of $K_{d+1}(z)$ in the interval $[a_i,b_i]$ for any $1\le i\le d$. To indicate a $d+1$th zero of $K_{d+1}(z)$, observe that $P_1(z)$ and $P_2(z)$ has different sign on $(-\infty, b_{d+1})$ and $P_2(z)$ has larger degree. 
 
 Since we found $d+1$ disjoint intervals containing at least one zero of $K_{d+1}(z)$, therefore each will contain exactly one zero, i.e., $K_{d+1}(z)$ has simple zeros $z_0^{(d+1)}>\dots>z_{d+1}^{(d+1)}$.
 
 To prove that the consecutive zeros are bounded, note that we have 
 \[
    \frac{z^{(d+1)}_{i+1}}{z^{(d+1)}_{i}}>\frac{b_{i+1}}{a_i}=\frac{bz_{i}^{(d)} }{b^{-1}z_{i}^{(d)}}=b^{2}.
 \]

Finally, note that $|z_{0}^{(d+1)}|<|a_1|=b^{-1}|z_{0}^{(d)}|$, which by induction is bounded by $b^{-d}z_0^{(1)}=b^{-d-1}$.
\end{proof}

\begin{remark}
 One could also easily obtain the real-rootedness of the polynomials $K_d(z)$ as follows. Let $P_{d}(z)=\sum_{i\ge 0}b^{-\binom{i}{2}}z^i$. Then it is well known (see e.g. \cite{brown}) that $P_d(z)$ is a real rooted polynomial, therefore $P^*_d(z)=z^{d}P_{d}(1/z)$ is also real rooted. Observe that the Schur-product of $P_d(z)$ and $P^*_d(z)$ is exactly $K_{d}(z)$, and since the Schur product preserves real-rootedness, we obtained that $K_d(z)$ has only real roots.
\end{remark}

Now we have all the ingredients to prove the Theorem~\ref{th:main}.
\begin{proof}[Proof of Theorem~\ref{th:main}]
 By Lemma~\ref{cl:express}, we can express $Z_{L(G)}(z,b)$ as a specialization of the multivariate subgraph counting polynomial, where each key polynomial coincides with one of the members of $\{K_{d}(z) ~|~ d\ge 1\}$. 
 By Lemma~\ref{lemma:realroot}, we know each key polynomial has to be real rooted, i.e., each $K^{(v)}(z)$ has no roots in $\{z\in\mathbb{C}~|~|\arg(z)|<\pi\}$. Choosing $\alpha=0$ in Theorem~\ref{th:wagner} we obtain the desired statement.
\end{proof}

Next we prove Theorem~\ref{th:fisher}.

\begin{proof}[Proof of Theorem~\ref{th:fisher}]
 Let $0<\alpha<\pi/2$ be fixed and take $C=\{z\in\mathbb{C}~|~|\arg(z)|\ge\pi-\alpha\}$. 
 Let us consider the following two variable polynomials
 \[
    K_{d}(z,t)=\sum_{i}\binom{d}{i}t^{\binom{d-i}{2}+\binom{i}{2}}z^i
 \]
 for some $1\le d\le \Delta$.
 
We know from Lemma~\ref{lemma:realroot} that for any fixed $b\geq 1$, all the roots of $K_d(z,b^{-1})$ are in $\textrm{int}(C)$. 
By continuity  we can find an open neighborhood $V_b$ of $b^{-1}$, such that $0\notin V_b$ and for any $1\le d\le \Delta$ and $\tau\in V_b$ the polynomial $K_d(z,\tau)$ has all its zeros contained in $\textrm{int}(C)$.
 Let $U:=\cup_{b\geq 1}V_b^{-1}$. Then $U$ is an open set containing $[1,\infty)$ and for any $b'\in U$ we have that the key polynomial $K_{d}(z,1/b')$ has no zeros contained in $\{z\in\mathbb{C}~|~|\arg(z)|<\pi-\alpha\}$.
 
Following the same argument as in the previous theorem, by the combination of Lemma~\ref{cl:express} and Theorem~\ref{th:wagner}, we obtain that for any $b'\in U$ the polynomial $Z_{L(G)}(\lambda,b')$ has no root in $\{z\in\mathbb{C}~|~|\arg(z)|<\pi-2\alpha\}$. 
In particular $Z_{L(G)}(1,b')\neq 0$ for all $b'\in U$.
\end{proof}

In a similar manner we can prove the following extensions:
\begin{proposition}
 For any graph $G$ and values $b_v\ge1$, a vector of variables $\lambda=(\lambda_v)_{v\in V}$, the multivariate polynomial
 \[
    F_{L(G)}(\lambda,(b_v)_{v\in V}):=\sum_{F\subseteq E}\prod_{v\in V}b_v^{-\binom{d_G(v)-d_F(v)}{2}-\binom{d_F(v))}{2}}\lambda_v^{d_F(v)}
 \]
 is weakly Hurwitz-stable, i.e. $\mathcal{S}[\pi/2]$-nonvanishing.
 
 Moreover, for any $\Delta\ge 2$ and $0<\alpha<\pi/2$ there exists a $[1,\infty)\subseteq U_{\Delta,\alpha}$, such that the following holds: for any graph $G$ of maximum degree $\Delta$ and any $b_v\in U_{\Delta,\alpha}$, the polynomial
 \[F_{L(G)}(\mathbf{\lambda},(b_v)_{v\in V})\neq 0\]    
 if $\lambda_v\in\{z\in\mathbb{C}\mid|\arg(z)|<\pi-2\alpha\}$.
\end{proposition}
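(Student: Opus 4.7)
The plan is to realise $F_{L(G)}$ as an instance of Wagner's multivariate subgraph counting polynomial and then apply Theorem~\ref{th:wagner}(i) in exactly the spirit of the proofs of Theorems~\ref{th:main} and~\ref{th:fisher}. First I would set $\lambda_e=1$ for each edge $e\in E(G)$, take the vertex weights $u^{(v)}=(b_v^{-\binom{d_G(v)-i}{2}-\binom{i}{2}})_{i=0}^{d_G(v)}$, and identify the Wagner variable $x_v$ with $\lambda_v$. Unpacking the definition of $Z_\mathcal{W}$, this gives
\[
F_{L(G)}(\lambda,(b_v)_{v\in V}) = Z_\mathcal{W}(G,\mathbf{1},u;\lambda),
\]
so the question reduces to locating zeros of the latter.

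Next I would verify the two hypotheses of Theorem~\ref{th:wagner}(i). The base polynomial is $\Omega(G,\mathbf{1};x)=\prod_{uv\in E}(1+x_ux_v)$; if $x_u,x_v\in \mathcal{S}[\pi/2]$, then $\arg(x_ux_v)=\arg(x_u)+\arg(x_v)\in(-\pi,\pi)$, so $x_ux_v\neq -1$ and $\Omega$ is $\mathcal{S}[\pi/2]$-nonvanishing. Moreover, the key polynomial at vertex $v$ is, by direct computation, exactly the univariate polynomial $K_{d_G(v)}(z)$ of equation~\eqref{eq:key ising} with parameter $b=b_v$.

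For the first assertion (weak Hurwitz stability), since each $b_v\ge 1$, Lemma~\ref{lemma:realroot} tells us that $K_{d_G(v)}(z)$ has only real negative zeros; hence each key polynomial is $\mathcal{S}[\pi]$-nonvanishing. Applying Theorem~\ref{th:wagner}(i) with $\alpha=0$ yields that $Z_\mathcal{W}(G,\mathbf{1},u;\lambda)$ is $\mathcal{S}[\pi/2]$-nonvanishing, as claimed.

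For the second assertion I would mimic the construction of $U_{\Delta,\alpha}$ from the proof of Theorem~\ref{th:fisher}. Consider the two-variable polynomials $K_d(z,t)=\sum_{i}\binom{d}{i}t^{\binom{d-i}{2}+\binom{i}{2}}z^i$ for $1\le d\le \Delta$. For any fixed $b\ge 1$, Lemma~\ref{lemma:realroot} places the zeros of $K_d(z,b^{-1})$ strictly inside $C=\{z\in\mathbb{C}\mid |\arg(z)|\ge \pi-\alpha\}$; by continuity of roots there is an open neighbourhood $V_b$ of $b^{-1}$, not containing $0$, such that $K_d(z,\tau)$ has all its zeros in $\mathrm{int}(C)$ for every $\tau\in V_b$ and every $1\le d\le \Delta$. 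Setting
\[
U_{\Delta,\alpha}=\bigcup_{b\ge 1}\{1/\tau \mid \tau\in V_b\}
\]
produces an open set containing $[1,\infty)$ with the property that for every $b_v\in U_{\Delta,\alpha}$ the corresponding key polynomial is $\mathcal{S}[\pi-\alpha]$-nonvanishing. A final application of Theorem~\ref{th:wagner}(i) (with the chosen $\alpha$) to the multivariate $Z_\mathcal{W}(G,\mathbf{1},u;\lambda)$ gives non-vanishing whenever every $\lambda_v\in\mathcal{S}[\pi-2\alpha]$, completing the proof. No step is genuinely hard here; the only mild care needed is in checking that $\Omega(G,\mathbf{1};x)$ is $\mathcal{S}[\pi/2]$-nonvanishing (which follows directly from the additivity of arguments) and in the continuity argument producing $U_{\Delta,\alpha}$, which is verbatim the one used for Theorem~\ref{th:fisher}.
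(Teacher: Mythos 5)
Your reduction to Wagner's theorem is exactly the route the paper intends (the proposition is stated there without proof, as a direct extension of Theorems~\ref{th:main} and~\ref{th:fisher}), and the first half of your argument is correct and complete: the identification $F_{L(G)}(\lambda,(b_v))=Z_\mathcal{W}(G,\mathbf{1},u;\lambda)$, the check that $\Omega(G,\mathbf{1};x)$ is $\mathcal{S}[\pi/2]$-nonvanishing, the observation that the key polynomial at $v$ is $K_{d_G(v)}(z)$ from \eqref{eq:key ising} with $b=b_v$, and the application of Lemma~\ref{lemma:realroot} and Theorem~\ref{th:wagner}(i) with $\alpha=0$. (A pedantic point: Lemma~\ref{lemma:realroot} is stated for $b>1$; for $b_v=1$ the key polynomial degenerates to $(1+z)^{d}$, which is still $\mathcal{S}[\pi]$-nonvanishing, so nothing breaks.)

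The final sentence of your second half, however, does not follow from what you have established. With each key polynomial $\mathcal{S}[\pi-\alpha]$-nonvanishing, Theorem~\ref{th:wagner}(i) yields that $Z_\mathcal{W}(G,\mathbf{1},u;\lambda)$ is $\mathcal{S}[\pi/2-\alpha]$-nonvanishing, \emph{not} $\mathcal{S}[\pi-2\alpha]$-nonvanishing. The wider sector $\pi-2\alpha$ in Theorem~\ref{th:fisher} only appears after the substitution $x_v=z^{1/2}$, which doubles the aperture when passing from the Wagner variables to the univariate Ising activity $z=x^2$; in the multivariate setting there is no such substitution. Indeed the per-variable claim with $\mathcal{S}[\pi-2\alpha]$ is false as you have written it: take $G$ a single edge $uv$ (maximum degree $1\le\Delta$) with $b_u=b_v=1\in U_{\Delta,\alpha}$, so that $F_{L(G)}=1+\lambda_u\lambda_v$, and choose $\lambda_u=e^{i(\pi/2+\delta)}$, $\lambda_v=e^{i(\pi/2-\delta)}$ with $0<\delta<\pi/2-2\alpha$; for $\alpha<\pi/4$ both arguments are less than $\pi-2\alpha$ in absolute value, yet $F_{L(G)}=0$. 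The conclusion your method actually delivers is nonvanishing for all $\lambda_v\in\mathcal{S}[\pi/2-\alpha]$, which is the correct multivariate analogue: the induced activity $\lambda_u\lambda_v$ of each vertex $uv$ of $L(G)$ then ranges over $\mathcal{S}[\pi-2\alpha]$, and this is how Theorem~\ref{th:fisher} is recovered. You should state the final sector as $\pi/2-\alpha$ rather than copying $\pi-2\alpha$ from the univariate statement (the same correction applies to the proposition as printed).
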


\subsection{Disk around zero}
A priori, we know from \cite[Remark~24]{peters2020location}, that for any graph $H$ of maximum degree at most $\Delta\ge 2$ the polynomial $Z_H(\lambda)$ has no root in a disk around $0$ of radius for some $0<r<1/b$. This is the last ingredient to conclude Corollary~\ref{cor:FPTAS}.

For the sake of completeness and also as one more example for Wagner's theorem, we will describe  a zero-free disk around zero for every fixed $b>1$.
Let us denote by $\kappa_{d,b}$ the absolute value of the shortest zero of the key polynomial $K_d(z)$ from \eqref{eq:key ising}. Then:

\begin{proposition}
 Let $G$ be a graph of maximum degree at most $\Delta\geq 2$, then 
 \[
    Z_{L(G)}(z,b)\neq 0,
 \]
 if $|z|<\kappa^2_{\Delta,b}$.
\end{proposition}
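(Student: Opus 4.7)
The plan is to apply part (ii) of Wagner's theorem (Theorem~\ref{th:wagner}) to the subgraph counting representation of $Z_{L(G)}$ supplied by Lemma~\ref{cl:express}. Explicitly, that lemma tells us
$$
Z_{L(G)}(z^2,b) = b^{|E(L(G))|}\cdot Z_\mathcal{W}\bigl(G,1,u;z,\dots,z\bigr),
$$
where the vertex weights $u^{(v)}=\bigl(b^{-\binom{d_G(v)-i}{2}-\binom{i}{2}}\bigr)_{i=0}^{d_G(v)}$ are precisely the ones for which the key polynomial of vertex $v$ equals $K_{d_G(v)}(z)$ from \eqref{eq:key ising}. Thus it suffices to exhibit a disk inside which the multivariate polynomial $Z_\mathcal{W}(G,1,u;x)$ does not vanish, and then specialize $x_v=z$.

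To feed Theorem~\ref{th:wagner}(ii) we need the two standard ingredients. The base polynomial
$$
\Omega(G,1,x)=\prod_{uv\in E(G)}\bigl(1+x_ux_v\bigr)
$$
is $1\mathcal{D}$-nonvanishing: whenever all variables lie in the open unit disk one has $|x_ux_v|<1$, so no factor vanishes. For the key polynomials, by definition of $\kappa_{d,b}$ the polynomial $K_d(z)$ is $\kappa_{d,b}\mathcal{D}$-nonvanishing. The one point requiring a brief extra remark is that $\kappa_{d,b}$ is monotonically decreasing in $d$: this is already visible in the proof of Lemma~\ref{lemma:realroot}, where it is shown that the zero of $K_{d+1}(z)$ closest to the origin satisfies $|z_0^{(d+1)}|<b^{-1}|z_0^{(d)}|$, so $\kappa_{d+1,b}<b^{-1}\kappa_{d,b}\le\kappa_{d,b}$. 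Consequently, for every vertex $v$ of a graph $G$ with $\Delta(G)\le \Delta$, the key polynomial $K_{d_G(v)}(z)$ is $\kappa_{\Delta,b}\mathcal{D}$-nonvanishing.

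Applying Theorem~\ref{th:wagner}(ii) with $\kappa=1$ and $\rho=\kappa_{\Delta,b}$ then yields that $Z_\mathcal{W}(G,1,u;x)$ is $\kappa_{\Delta,b}\mathcal{D}$-nonvanishing as a multivariate polynomial. Specializing to $x_v=z$ for every $v$, the polynomial $Z_\mathcal{W}(G,1,u;z,\dots,z)$ has no zero in the disk $|z|<\kappa_{\Delta,b}$. Substituting $w=z^2$ in the identity at the top of this proposal, we conclude $Z_{L(G)}(w,b)\ne 0$ for all $|w|<\kappa_{\Delta,b}^2$, as claimed. No step is genuinely hard; the only subtlety is the monotonicity of $\kappa_{d,b}$ in $d$, which is essentially free from the interlacing analysis already carried out for Lemma~\ref{lemma:realroot}.
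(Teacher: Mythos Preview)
Your proof is correct and follows essentially the same route as the paper: express $Z_{L(G)}$ via Lemma~\ref{cl:express} as a subgraph counting polynomial, note that the base polynomial $\prod_{uv}(1+x_ux_v)$ is $\mathcal{D}$-nonvanishing, observe that each key polynomial $K_{d_G(v)}$ is $\kappa_{\Delta,b}\mathcal{D}$-nonvanishing, and apply Theorem~\ref{th:wagner}(ii). The one place where you add value over the paper's terse version is in making explicit the monotonicity $\kappa_{d+1,b}<b^{-1}\kappa_{d,b}$ (extracted from the interlacing argument in Lemma~\ref{lemma:realroot}), which is needed to ensure that the key polynomial at every vertex of degree $\le\Delta$ really is nonvanishing on the common disk of radius $\kappa_{\Delta,b}$; the paper asserts this without comment.
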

\begin{proof}
From Lemma~\ref{lemma:realroot} we know that the polynomials $K_d(z)$ for $1\le d\le \Delta$ are non-zero in a disk of radius $\kappa_{\Delta,b}$. The polynomial $(1+xy)$ is $\mathcal{D}$-nonvanishing, thus the corresponding base polynomial is also $\mathcal{D}$-nonvanishing. Thus by the second part of Theorem~\ref{th:wagner}, if none of the key polynomials vanishes in a disk of radius $\kappa_{\Delta,b}$, then the corresponding subgraph counting polynomial 
\[
    Z_{L(G)}(z,b)=b^{|E(L(G))| }Z_\mathcal{W}(G,1,(b^{-\binom{d_G(v)-i}{2}-\binom{i}{2}})_{i=0}^{d_G(v)};z^{1/2})
\]
will be zero free in a disk of radius $\kappa_{\Delta,b}^2$.
\end{proof}

\section{Further remarks}

To finish the paper we would like to give two remarks.

One might wonder about the location of the zeros of the edge cover polynomials. First of all, if we look the closure of all possible zeros of $\mathcal{E}(\mathcal{H},z)$, then we would obtain the whole complex plane, since already the closure of the zeros of domination polynomials are dense in $\mathbb{C}$ (see \cite{brown2014roots}). But what happens, if we consider zeros of the edge cover polynomial of graphs? For instance, the closure of zeros of the edge cover polynomial of paths is dense in $[-4,0]$ (see \cite{csikvari2011roots} ), therefore the described region in Theorem~\ref{thm:edge cover strong} is tight for real zeros.


\begin{question}
 Is the set of zeros of edge-cover polynomials of graphs dense in the cardioid $\{-(1-\alpha)^2\mid |\alpha|\leq 1\}$?
\end{question}

\begin{figure}[ht]
\begin{subfigure}{.4\textwidth}
\centering
\begin{tikzpicture}
	\foreach \i in {1,2} {
	\node[vertex] (w\i) at (-\i, 0) {};
	\node[vertex] (e\i) at (\i,0) {};
	};
	\node[vertex] (u1) at (0,1) {};
	\node[vertex] (u2) at (0,-1) {};
	
	\draw (w2) -- (w1);
	\draw (w1) -- (u1);
	\draw (w1) -- (u2);
	\draw (e2) -- (e1);
	\draw (e1) -- (u1);
	\draw (e1) -- (u2);
	\draw (u1) -- (u2);
\end{tikzpicture}
\caption{A claw-free graph with $Z_{G}(\lambda,b)=\lambda^6 + (4 b^3 + 2 b) \lambda^5 + (b^6 + 11 b^4 + 3 b^2) \lambda^4 + (12 b^5 + 8 b^3) \lambda^3 + (b^6 + 11 b^4 + 3 b^2) \lambda^2 + (4 b^3 + 2 b) \lambda + 1$}
\end{subfigure}\qquad
\begin{subfigure}{.4\textwidth}
\centering
\begin{tikzpicture}[scale=1.3]
	\foreach \i in {0,1} {
        \node[vertex] (w\i) at (-1, \i) {};
        \node[vertex] (m\i) at (0,\i) {};
        \node[vertex] (e\i) at (1,\i) {};
	};
	
	\draw (w0) -- (w1);
	\draw (m0) -- (m1);
	\draw (e0) -- (e1);
    \draw (w0) -- (m0);
    \draw (w0) -- (m1);
    \draw (e0) -- (m0);
    \draw (e0) -- (m1);
    \draw (w1) -- (m0);
    \draw (w1) -- (m1);
    \draw (e1) -- (m0);
    \draw (e1) -- (m1);	
\end{tikzpicture}\vspace{1em}
\caption{A claw-free graph with $Z_{G}(\lambda,b)=\lambda^6 + (2 b^5 + 4 b^3) \lambda^5 + (b^8 + 12 b^6 + 2 b^4) \lambda^4 + (16 b^7 + 4 b^5) \lambda^3 + (b^8 + 12 b^6 + 2 b^4) \lambda^2 + (2 b^5 + 4 b^3) \lambda + 1$}
\end{subfigure}
\caption{Claw-free graphs where $Z^{MC}_G(\lambda)=\lambda^2(1+\lambda^2)$ is not real-rooted.}
\label{fig:clawfree}
\end{figure}

Our second remark concerns the Ising model. Another famous anti-ferromagnetic model  is the hard-core model, whose partition function is essentially the independence polynomial of graphs. It is well known that independence polynomials of line graphs have only real zeros \cite{heilmann1972theory}. Moreover, according to a theorem of Chudnovsky and Seymour \cite{chudnovsky2007roots}, the independence polynomial has only real zeros also for claw-free graphs. 
Thus, the natural question whether the anti-ferromagnetic Ising model $Z_G(\lambda,b)$ has only real zeros for claw-free graphs for any choice of $b\ge 1$ arises. 
The answer to this question is no.
To see why, let us fix a graph  $G$, such that $Z_G(\lambda)$ is real rooted for any $b>1$. Also, denote by $M$ the size of a largest cut in the graph $G$. Then as $b\to \infty$
\[
    \frac{Z_G(\lambda,b)}{b^M}\to \sum_{S\subseteq V}\lambda^{|S|}\mathbf{1}_{\textrm{$S$ realizes the maximum cut}}=:Z^{MC}_G(\lambda)
\]
has to be a real rooted polynomial. 
For the claw-free graphs in Figure~\ref{fig:clawfree} this is not case.




\end{document}